\date{}
\title{Erd\H{o}s--Szekeres-type statements:\\
Ramsey function and decidability in dimension~1\thanks{Both authors
were supported by the ERC Advanced Grant No.~267165.
J.\,M. was also supported by the project P202/12/G061 of GA~\v{C}R.}}

\author{
{\sc Boris Bukh}\\
  {\footnotesize Centre for Mathematical Sciences}\\[-1.5mm]
  {\footnotesize Wilberforce Rd, University of Cambridge}\\[-1.5mm]
  {\footnotesize Cambridge CB3 0WB, Great Britain and}\\
  {\footnotesize Churchill College, Storey's Way}\\[-1.5mm]
  {\footnotesize Cambridge CB3 0DS, Great Britain}
\and
{\sc Ji\v{r}\'{\i} Matou\v{s}ek}\\
   {\footnotesize Department of Applied Mathematics}\\[-1.5mm]
   {\footnotesize  Charles University, Malostransk\'{e} n\'{a}m. 25}\\[-1.5mm]
{\footnotesize  118~00~~Praha~1,
   Czech Republic, and}\\
{\footnotesize    Institute of  Theoretical Computer Science}\\[-1.5mm]
{\footnotesize    ETH Zurich,
      8092 Zurich, Switzerland}
}

\newcommand{\cmt}[1]{\ifhmode\newline\fi{\sf *** \ \ #1 \\}}

\documentclass[11pt]{article}

\usepackage{a4wide}

\ifdefined\marginlesspreview
\usepackage[papersize={130mm,190mm},text={127mm,186mm}]{geometry}
\fi

\usepackage{amsmath,amsthm,amssymb,tabularx,enumitem}
\usepackage{graphicx}
\usepackage{url}

\usepackage[colorlinks=true,citecolor=blue,urlcolor=blue,linkcolor=blue,bookmarksopen=true]{hyperref}

\newtheorem{theorem}{Theorem}[section]
\newtheorem{definition}[theorem]{Definition}
\newtheorem{prop}[theorem]{Proposition}
\newtheorem{observation}[theorem]{Observation}
\newtheorem{lemma}[theorem]{Lemma}
\newtheorem{corollary}[theorem]{Corollary}

\newcommand*{\R}{\mathbb{R}}                          
\newcommand*{\C}{{\mathbb{C}}}
\newcommand*{\N}{{\mathbb{N}}}

\newcommand*{\FF}{{\mathcal{F}}}
\newcommand*{\GG}{{\mathcal{G}}}
\newcommand*{\DD}{{\mathcal{D}}}
\newcommand*{\QQ}{{\mathcal{Q}}}

\newcommand\makevec[1]{{\bf #1}}
\newcommand*{\xx}{{\makevec x}}
\newcommand*{\yy}{{\makevec y}}
\renewcommand*{\aa}{{\makevec a}}
\newcommand*{\bb}{{\makevec b}}

\newcommand\bPhi{{\makevec\Phi}}


\newcommand*{\seq}[1]{{\underline{#1}}}

\DeclareMathOperator{\len}{len}                      
\DeclareMathOperator{\rev}{rev}                       

\DeclareMathOperator{\sgn}{sgn}

\DeclareMathOperator{\ES}{ES}
\DeclareMathOperator{\conv}{conv}

\newcommand{\alterdef}[1]{\!\left\{\!\!\begin{array}{ll}
                                   #1 \end{array}  \right. }

\newcommand{\heading}[1]{\vspace{1ex}\par\noindent{\bf #1}}

\renewcommand\:{\colon}

\long\def\onefigure#1#2{
\begin{figure*}[tbp]
\begin{center}
#1
\end{center}
\caption{#2}
\end{figure*}
}

\def\immediateFigure#1{%
\smallskip\begin{center}#1\end{center}\smallskip }

\newcommand{\labfig}[2]  
{\onefigure{\mbox{\includegraphics{#1}}}{\label{f:#1} #2} }

\newcommand{\labfigw}[3]  
{\onefigure{\mbox{\includegraphics[width=#2]{#1}}}{\label{f:#1} #3}}

\newcommand{\immfig}[1]  
{\immediateFigure{\mbox{\includegraphics{#1}}}}

\newcommand{\immfigw}[2] 
{\immediateFigure{\mbox{\includegraphics[width=#2]{#1}}}}

\begin{document}

\maketitle

\begin{abstract} 
A classical and widely used lemma of Erd\H{o}s
and Szekeres asserts that for every $n$ there exists $N$ such
that every $N$-term sequence $\seq a$ of real numbers
contains an $n$-term increasing subsequence or an $n$-term
nonincreasing subsequence; quantitatively, the smallest
$N$ with this property equals $(n-1)^2+1$. In the setting of the present
paper, we express this lemma by saying that the set
of predicates $\bPhi=\{x_1<x_2,x_1\ge x_2\}$ is 
Erd\H{o}s--Szekeres with Ramsey function $\ES_\bPhi(n)=(n-1)^2+1$.

In general, we consider an arbitrary finite set $\bPhi=
\{\Phi_1,\ldots,\Phi_m\}$ of \emph{semialgebraic predicates},
meaning that  each $\Phi_j=\Phi_j(x_1,\ldots,x_k)$ 
is a Boolean combination of polynomial equations and inequalities
in some number $k$ of real variables. We define
$\bPhi$ to be \emph{Erd\H{o}s--Szekeres} if for every $n$
there exists $N$ such that  each $N$-term sequence $\seq a$
of real numbers has an $n$-term subsequence $\seq b$
such that at least one of the $\Phi_j$ holds everywhere on $\seq b$,
which means that $\Phi_j(b_{i_1},\ldots,b_{i_k})$ holds
for every choice of indices $i_1,i_2,\ldots,i_k$, $1\le i_1<i_2<\cdots
<i_k\le n$. We write $\ES_\bPhi(n)$ for the smallest $N$
with the above property.

We prove two main results. First, the Ramsey
functions in this setting are at most doubly exponential
(and sometimes they are indeed doubly exponential):
for every $\bPhi$ that is Erd\H{o}s--Szekeres, there
is a constant $C$ such that $\ES_\bPhi(n)\le 2^{2^{Cn}}$.
Second, there is an algorithm that, given $\bPhi$, decides
whether it is Erd\H{o}s--Szekeres; thus, one-di\-mension\-al
Erd\H{o}s--Szekeres-style theorems can in principle
be proved automatically. 

We regard these results as a starting point in investigating
analogous questions for $d$-di\-mension\-al predicates, 
where instead of sequences of real numbers,
we consider sequences of points in $\R^d$ 
(and semialgebraic predicates in their coordinates).
This setting includes many results and problems
in geometric Ramsey theory, and it appears considerably more involved.
Here we prove a decidability result for 
 \emph{algebraic} predicates in $\R^d$ (i.e., conjunctions
of polynomial equations), as well as for a multipartite
version of the problem with arbitrary semialgebraic
predicates in~$\R^d$.
\end{abstract}

\section{Introduction}

\heading{Motivation and background. } 
Ramsey-type theorems claim that, generally speaking, any sufficiently 
large structure of a given kind
contains a ``very regular'' substructure of a prescribed size.
The following two gems from a 1935 paper of Erd\H{o}s and Szekeres
\cite{es-cpg-35} belong among the earliest, best known, and most
useful instances.

\begin{theorem}[On monotone subsequences]\label{t:es-monot}
For every $n\ge 2$, every sequence $(a_1,a_2,\ldots,a_N)$
of real numbers, with $N\ge (n-1)^2+1$, contains a monotone
subsequence of length $n$; more precisely, there are
indices  $i_1<i_2<\cdots<i_n$ such that either
$a_{i_1}<\cdots<a_{i_n}$ or $a_{i_1}\ge \cdots\ge a_{i_n}$.
\end{theorem}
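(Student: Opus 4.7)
The plan is to use the classical pigeonhole argument attributed to Seidenberg. For each index $i \in \{1, \ldots, N\}$, I would assign a pair of positive integers $(f(i), g(i))$, where $f(i)$ is the length of the longest strictly increasing subsequence ending at $a_i$, and $g(i)$ is the length of the longest nonincreasing (i.e., weakly decreasing) subsequence ending at $a_i$. This labeling encodes the local ``monotonicity history'' at each position, which is the key structural information needed.

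The main step is to show that the map $i \mapsto (f(i), g(i))$ is injective. Given $i < j$, consider the two cases dictated by the dichotomy $a_i < a_j$ or $a_i \ge a_j$. In the first case, any strictly increasing subsequence of length $f(i)$ ending at $a_i$ can be extended by appending $a_j$, so $f(j) \ge f(i) + 1$. In the second case, any nonincreasing subsequence of length $g(i)$ ending at $a_i$ extends by $a_j$, so $g(j) \ge g(i) + 1$. Either way $(f(i), g(i)) \ne (f(j), g(j))$, so the pairs are all distinct.

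To conclude, I would argue by contradiction: if no monotone subsequence of length $n$ exists, then $f(i), g(i) \in \{1, \ldots, n-1\}$ for every $i$, so the pairs live in a set of size $(n-1)^2$. Since $N \ge (n-1)^2 + 1$, the pigeonhole principle contradicts injectivity, yielding the desired $n$-term monotone subsequence.

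The argument is essentially frictionless, so I do not anticipate a genuine obstacle. The only subtlety to handle carefully is the asymmetric treatment of the two monotonicity types (strict increasing versus weak decreasing), which matches the asymmetric conclusion in the statement; one must verify that equality $a_i = a_j$ is absorbed into the nonincreasing case, so that the dichotomy is exhaustive and the extension argument goes through in both directions.
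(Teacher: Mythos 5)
Your proof is correct. The paper does not actually prove this theorem---it is stated as a classical result of Erd\H{o}s and Szekeres and the reader is referred to Steele's survey for proofs---so there is nothing to compare against directly; your Seidenberg-style pigeonhole argument is one of the standard proofs collected there. The injectivity of $i\mapsto(f(i),g(i))$ is established correctly, and you rightly flag the one point that needs care: the dichotomy $a_i<a_j$ versus $a_i\ge a_j$ is exhaustive precisely because equality is folded into the nonincreasing case, which matches the asymmetric (strict/weak) form of the conclusion.
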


See, for example, Steele \cite{steele-surv} for a collection
of six nice proofs and many applications.

\begin{theorem}[On subsets in convex position]\label{t:es-conv}
For every $n\ge 3$,
among every $N={2n-4\choose n-2}+1\le 4^n$
points in the plane, no three collinear, one can always
select $n$ points in convex position (i.e., forming the
vertex set of a convex $n$-gon).
\end{theorem}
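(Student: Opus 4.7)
My plan is to follow the classical ``cups and caps'' reduction of Erd\H{o}s and Szekeres. After a generic rotation we may assume the $N$ points have pairwise distinct $x$-coordinates; order them $p_1,p_2,\ldots,p_N$ from left to right. Call a sub-sequence $p_{i_1},\ldots,p_{i_k}$ a \emph{$k$-cup} (respectively, a \emph{$k$-cap}) if the slopes of the consecutive segments $p_{i_j}p_{i_{j+1}}$ are strictly increasing (respectively, strictly decreasing); the ``no three collinear'' hypothesis ensures that consecutive slopes are never equal, so every triple is either a $3$-cup or a $3$-cap. Since any $k$-cup or $k$-cap is automatically in convex position, the theorem reduces to showing that every set of $\binom{2n-4}{n-2}+1$ points in general position contains either an $n$-cup or an $n$-cap.

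The natural quantity to analyse is $f(k,\ell)$, the least $N$ such that every $N$-point set in general position contains a $k$-cup or an $\ell$-cap. I would prove $f(k,\ell)\le \binom{k+\ell-4}{k-2}+1$ by induction on $k+\ell$, with trivial base cases $f(3,\ell)=\ell$ and $f(k,3)=k$. The inductive step is Pascal's identity applied to the recurrence
\[
  f(k,\ell)\le f(k-1,\ell)+f(k,\ell-1)-1.
\]
Specialising to $k=\ell=n$ and using the estimate $\binom{2m}{m}\le 4^m$ then gives both the precise bound $\binom{2n-4}{n-2}+1$ and the coarser $4^n$.

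The substance of the argument lies in establishing the recurrence. I would take $N=f(k-1,\ell)+f(k,\ell-1)-1$ points in which no $\ell$-cap exists, and let $T$ be the set of points that appear as the right endpoint of some $(k-1)$-cup. The complement of $T$ cannot itself contain a $(k-1)$-cup (the right endpoint of such a cup would then lie both in $T$ and outside $T$), so $|T|\ge N-f(k-1,\ell)+1=f(k,\ell-1)$; by definition of $f$, the set $T$ therefore contains either a $k$-cup, in which case we are done, or an $(\ell-1)$-cap $p_1,\ldots,p_{\ell-1}$. The remaining and most delicate step is the following slope dichotomy: $p_1\in T$ is the right endpoint of some $(k-1)$-cup $q_1,\ldots,q_{k-2},p_1$, and comparing the slope of $q_{k-2}p_1$ with that of $p_1p_2$ either produces a $k$-cup (by appending $p_2$) or an $\ell$-cap (by prepending $q_{k-2}$); both outcomes contradict our standing assumptions, closing the induction. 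This slope-comparison dichotomy is the only place where the general-position hypothesis is used essentially, and it is the one step where I would have to be careful in the write-up.
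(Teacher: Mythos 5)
Your proposal is correct: it is the classical ``cups and caps'' argument, which is exactly the proof of this theorem that the paper itself does not reproduce but instead cites (to Morris--Soltan and to Matou\v{s}ek's book). The base cases $f(3,\ell)=\ell$, $f(k,3)=k$, the recurrence, Pascal's identity, and the slope dichotomy at the point where the $(\ell-1)$-cap meets the $(k-1)$-cup are all exactly right; the only blemish is a slight looseness of phrasing at the end (``both outcomes contradict our standing assumptions'' when you in fact only assumed away an $\ell$-cap, so finding a $k$-cup is a success rather than a contradiction), but the underlying logic is sound and this is a wording issue, not a gap.
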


See, e.g., \cite{MorrisSoltan,Mat-dg} for proofs and surveys
of developments around this result.

Many geometric Ramsey-type questions have been investigated in
the literature; here we mention just three examples that directly
motivated our research. (We refer to \cite[Chap.~9]{Mat-dg}
for references to many other geometric Ramsey-type results somewhat related
to our line or research, and to Alon et al.~\cite{AlonPPRS05}
Fox et al.~\cite{fox-al-geomexp} for a sample of 
recent work.)

The \emph{colored Tverberg theorem}, conjectured by B\'ar\'any,
F\"uredi, and Lov\'asz \cite{bfl-nhp-90} and proved by 
 Vre{\'c}ica and {\v Z}ivaljevi{\'c} \cite{vz-ctpci-92}
asserts that \emph{for every $d$ and $r$ there exists $t$ such
that if $A_1,\ldots,A_{d+1}$ are $t$-point sets in $\R^d$
(we imagine that each $A_i$ has its own color; e.g., the points
of $A_1$ are red, those of $A_2$ blue, etc.),
there are pairwise disjoint $(d+1)$-point 
sets $B_1,B_2,\ldots,B_r\subseteq \bigcup_{i=1}^{d+1} A_i$
such that $|B_j\cap A_i|=1$ for every $i,j$ (i.e., each $B_j$
uses every color exactly once),
and $\bigcap_{j=1}^r\conv(B_j)\ne\emptyset$.}
For us, this result is remarkable because all known proofs use
topological methods---there is no known proof by ``ordinary'' geometric
and/or combinatorial arguments (although some special cases do have
elementary proofs).

The following result was needed 
as a lemma in the paper \cite{bukh-al-sensitive-apx}
by Loh, Nivasch, and the first author: \emph{every sequence
$(\aa_1,\aa_2,\ldots,\aa_N)$ of points in $\R^2$ with increasing 
$x$-coordinates has a subsequence $(\bb_1,\ldots,\bb_n)$
in which every $7$-tuple
$\bb_{i_1},\ldots,\bb_{i_7}$, $i_1<\cdots<i_7$, is
as in the next picture, i.e., the triangle $\bb_{i_2}\bb_{i_4}\bb_{i_6}$
contains the intersection of the segments $\bb_{i_1}\bb_{i_5}$
and $\bb_{i_3}\bb_{i_7}$---provided that $N$ is sufficiently large
in terms of~$n$.}
\immfig{7pt}
While the proof in \cite{bukh-al-sensitive-apx} is simple,
the situation with an appropriate $d$-di\-mension\-al generalization
is discouraging: there are increasingly complicated proofs
up to dimension~$4$, while the $5$-di\-mension\-al case already seems
out of reach with the present methods.

Yet another example we want to mention is a Ramsey-type result
of Eli\'a\v{s} and the second author \cite{eli-mat}: \emph{every
$N$-point sequence $(\aa_1,\ldots,\aa_N)$ of points in the
plane with increasing $x$-coordinates has an $n$-term subsequence
that is \emph{$k$th order monotone}, meaning that either every $(k+1)$-tuple
of points lies on the graph of a smooth function with nonnegative $k$th
derivative, or every $(k+1)$-tuple lies on the graph of a smooth function 
with nonpositive $k$th derivative.} (This is a common generalization
of Theorems~\ref{t:es-monot} and~\ref{t:es-conv}.) Here $N$ depends
on $n$ and $k$; the existence of \emph{some} $N$ follows immediately
from Ramsey's theorem for $(k+1)$-tuples, but an interesting question
here is the behavior of the Ramsey function---how big is the smallest $N=
N_k(n)$ that works. We have $N_1(n)$ of order $n^2$ and $N_2(n)$
of order roughly $4^n$
according to Erd\H{o}s and Szekeres, and in \cite{eli-mat}
it was proved that $N_3(n)$ is doubly exponential, i.e.,
$2^{2^{c_1 n}}\le N_3(n)\le 2^{2^{c_2n}}$. The order of magnitude
of $N_4(n)$ is unknown, and so are the Ramsey functions for numerous
other geometric Ramsey-type results.

\heading{Erd\H{o}s--Szekeres predicates. } The above examples and
some others motivate (at least) three general questions formulated
below. Before stating them, we introduce some notions and notation.

Let $k$ be an integer, which we think of as small and fixed,
and let $\Phi=\Phi(\xx_1,\ldots,\xx_k)$ be a $d$-di\-mension\-al
$k$-ary predicate, by which we mean
a mapping $(\R^d)^k\to \{{\rm False},{\rm True}\}$.
We say that $\Phi$ \emph{holds everywhere} on a sequence
$\seq\aa=(\aa_1,\ldots,\aa_n)$ of points in $\R^d$ if
 $\Phi(\aa_{i_1},\aa_{i_2},\ldots,\aa_{i_k})$ holds for every
increasing $k$-tuple $i_1,\ldots,i_k$ of indices,
$1\le i_1<i_2\cdots<i_k\le n$.

\begin{definition}\label{d:esp}
Let $\bPhi$ be a set of $d$-di\-mension\-al predicates.
We say that $\bPhi$ is \emph{Erd\H{o}s--Szekeres}
if for every $n$ there exists $N$ such that every sequence
$\seq \aa$ of length $N$  has a subsequence
$\seq \bb$ of length $n$ such that there is a predicate $\Phi\in\bPhi$
that holds everywhere on $\seq\bb$.
Let $\ES_{\bPhi}(n)$ denote the corresponding Ramsey
function, i.e. the smallest $N$ with the property above.
\end{definition}

For example, Theorem~\ref{t:es-monot} on monotone subsequences
can be re-stated as follows in this language: the
set $\bPhi=\{x_1<x_2,x_1\ge x_2\}$ of $1$-di\-mension\-al
predicates is {Erd\H{o}s--Szekeres},
with $\ES_\bPhi(n)=(n-1)^2-1$ (note that we do not write points
in $\R^1$, i.e., real numbers, in boldface---unlike points in $\R^d$). 

For Theorem~\ref{t:es-conv}
on subsets in convex position, we can let $\bPhi$ consist
of a single $4$-ary predicate $\Phi_{\rm conv}(\xx_1,\ldots,\xx_4)$
expressing that the 4-tuple $\xx_1,\ldots,\xx_4$ is in convex position
(with appropriate handling of collinearities). Indeed, a set is in convex
position if each of its 4-tuples is. Alternatively, we can set
$\bPhi':=\{\Phi_{\rm pos},\Phi_{\rm neg}\}$, where
$\Phi_{\rm pos}(\xx_1,\xx_2,\xx_3)$ expresses that the ordered
triple $(\xx_1,\xx_2,\xx_3)$ has a positive orientation, while
$\Phi_{\rm neg}$ expresses negative orientation. 

Let us remark that the notion of Erd\H{o}s--Szekeres set of predicates
is not general enough to express the colored Tverberg theorem mentioned
above, for example; however, it could easily be extended to a colored
setting, if that proved useful.\footnote{Perhaps more seriously,
there are results and problems in geometric Ramsey theory that do not
fit this kind of framework at all. For example, the famous recent
\emph{6-hole theorem} of Gerken \cite{gerken-6h} and Nicol\'as \cite{nicolas}
asserts that every sufficiently large point set $P\subset \R^2$
in general position contains an empty hexagon, i.e., six points
in convex position whose convex hull contains no other point of~$P$.
Here the problem is that this is not a ``pure'' Ramsey-type result,
since, although we pass to a subset, the original points still play
a role as ``obstacles''. We suspect general problems of this kind to be
substantially harder than those considered in this paper.
}

Another thing worth noting here
is that given a finite set $\bPhi$ of predicates, it is possible to
produce a single predicate $\Phi$ such that
$\bPhi$ is Erd\H{o}s--Szekeres iff $\{\Phi\}$ is.
Moreover, the functions $\ES_\bPhi$ and $\ES_{\{\Phi\}}$
can be related; see Section~\ref{s:replPhis} for a discussion.
However, the passage from $\bPhi$ to $\Phi$ increases
the number of variables and produces a rather cumbersome predicate $\Phi$,
while treating a set of predicates in our development is not
much more complicated than treating a single one, so
we prefer dealing with sets of predicates.

The predicates in Theorems~\ref{t:es-monot} and~\ref{t:es-conv}, 
as well as in the other examples
above and elsewhere in geometric Ramsey theory, can be represented
as semialgebraic predicates, where a \emph{$d$-di\-mension\-al
$k$-ary semialgebraic predicate} $\Phi=\Phi(\xx_1,\ldots,\xx_k)$
is a Boolean combination of polynomial equations and 
inequalities in the coordinates of the $k$ points $\xx_1,\ldots,\xx_k\in\R^d$
(the polynomials are assumed to have rational coefficients).
In this paper we consider only semialgebraic 
predicates.

We consider the following general questions quite natural and very
interesting.
\begin{enumerate}
\item (Ramsey function) \emph{What is the largest possible order
of magnitude of $\ES_{\bPhi}$, where $\bPhi$ is a finite
set of $d$-di\-mension\-al $k$-ary semialgebraic predicates?}

A simple upper bound follows from Ramsey's theorem.
For $\bPhi=\{\Phi_1,\ldots,\Phi_m\}$ and a sequence $\seq\aa$ of length $N$,
we color a $k$-tuple $I\subseteq \{1,2,\ldots,N\}$ with color 
$i\in\{1,2,\ldots,m\}$ if $\Phi_i$ holds
for  the $k$-tuple in $\seq\aa$ indexed by $I$ (if there are several possibilities, we pick one arbitrarily).
The color $m+1$ is used for the $k$-tuples on which no $\Phi_i$ holds.
By Ramsey's theorem, if $N\ge R_k(n;m+1)$ (the Ramsey number for $k$-tuples
with $m+1$ colors), $\seq\aa$ contains a subsequence
of length $n$ in which all $k$-tuples have
the same color. If $\bPhi$ is Erd\H{o}s--Szekeres, then for $n\ge \ES_\bPhi(k)$,
color $m+1$ is impossible. Thus, we get $\ES_\bPhi(n)\le
R_k(n;m+1)$ for all sufficiently large $n$, and $R_k(n;m+1)$ is bounded
above by a tower function of height~$k$---see, e.g., \cite{grs-rt-90}.
As shown by Alon et al.~\cite{AlonPPRS05}, if a semialgebraic
predicate $\Phi=\Phi(\xx_1,\xx_2)$
is binary and symmetric, i.e., 
$\Phi(\xx_1,\xx_2)\Leftrightarrow \Phi(\xx_2,\xx_1)$, 
and $\bPhi=\{\Phi,\neg\Phi\}$, then
$\ES_{\bPhi}$ is even bounded by a polynomial (depending on $d$ and~$\Phi$).
\item (Decidability) \emph{Is there an algorithm that,
given a finite set $\bPhi$ of $d$-di\-mension\-al semialgebraic predicates,
decides whether $\bPhi$ is Erd\H{o}s--Szekeres?}

By a celebrated result of Tarski \cite{t-dmeag-51}, the first-order
theory of the reals is decidable. That is, there is an algorithm
deciding the validity of formulas
of the form \[(Q_1x_1)\ldots(Q_k x_k)\Phi(x_1,\ldots,x_k),\] where
$Q_1,\ldots,Q_k\in\{\forall,\exists\}$ are quantifiers
for the real variables $x_1,\ldots,x_k$ and $\Phi$
is a semialgebraic predicate in our sense.  This is one of the most
useful decidability results, and there is an extensive literature
studying efficient algorithms for this problem and various
special cases (see, e.g., \cite{BasuPollackRoy-book}). 
This may give some hope for decidability of the Erd\H{o}s--Szekeres
property, and quantifier-elimination methods
for the first-order theory of the reals play a significant role
in the present paper.
\item\label{i:indiscer}
 (Homogeneous, or indiscernible, sequences) \emph{What can be said
about very long, or infinite, point sequences that are homogeneous
w.r.t.\ interesting classes of semialgebraic predicates?}

To explain this question, let us first consider all $2$-di\-mension\-al
semialgebraic predicates $\Phi$ that depend only on the orientation
of triples of points; for example, ``$\xx_1$ lies in the convex
hull of $\xx_2,\xx_3,\xx_4$''. Then every point sequence $C$ in convex position
(i.e., the points are in convex position and numbered along the
circumference of the convex hull either clockwise or counterclockwise)
is \emph{homogeneous} for this class of predicates, meaning that
each $k$-ary such predicate $\Phi$ either holds on all $k$-tuples
of $C$ or holds on none of them. Moreover, as far as these predicates
are concerned, all sequences in convex position look the same
(except for two possible orientations, that is).

However, some natural Ramsey-type questions, such as the colored Tverberg
problem or the question with triangles containing segment intersections
mentioned earlier, need ``higher-order'' geometric
predicates, such as whether the intersection of two lines,
each  determined by a pair of the considered points, lies above
or below the line spanned by another pair of points. (Semialgebraic
predicates of course include such predicates and much more.)
Here it is far from obvious what homogeneous sequences for such
predicates might look like. Let us remark that one kind of  homogeneous
set for semialgebraic predicates of bounded complexity,
called the \emph{stretched diagonal}, was used as
an interesting example in the recent papers \cite{BMN10,BMN11}, together with
a related construction of a \emph{stretched grid}---this is yet
another motivation for our investigations here.
\end{enumerate}

\heading{Answers for dimension~1. } 
Luckily, after investigating such problems for some time,
we found the (unpublished) thesis of Rosenthal
\cite{Rosenthal-thesis} which,
in the language of logic and model theory, provided an answer to the
third question (homogeneous sets) for $d=1$. With the help of
Rosenthal's results and methods, together with other techniques,
we then succeeded in answering the first two questions for $d=1$.

We were surprised by the answer to the first question:
it turned out that in dimension~1, the Ramsey functions
 can be at most doubly exponential,
independent of the arity of the predicates.

\begin{theorem}\label{t:rams-ub}
For every finite Erd\H{o}s--Szekeres set $\bPhi$ of
$1$-di\-mension\-al predicates
there exists a number $C=C(\bPhi)$ such that
$$
\ES_\bPhi(n) \le 2^{2^{Cn}}.
$$
\end{theorem}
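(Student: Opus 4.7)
The plan is to proceed in two stages. First, I would use the classical Erd\H{o}s--Szekeres theorem (Theorem~\ref{t:es-monot}) to extract from the input sequence $\seq a$ a strictly monotone---without loss of generality, strictly increasing---subsequence of length $\sqrt{N}$. This step costs only a square root and immediately resolves all order constraints of the form $x_i<x_j$ appearing inside the predicates of~$\bPhi$, so from now on every $\Phi_j$ may be treated as a Boolean combination of polynomial (in)equalities on the open simplex $\{x_1<\cdots<x_k\}$.

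The second, more substantial stage aims to find, inside any sufficiently long increasing sequence, a further subsequence on which the \emph{sign vector} of every polynomial appearing in $\bPhi$ is constant across all increasing $k$-tuples. Such an indiscernible subsequence, once it has length at least $\max(n,\ES_\bPhi(k))$, must by the Erd\H{o}s--Szekeres hypothesis on $\bPhi$ have some predicate $\Phi_j\in\bPhi$ holding everywhere on its $k$-tuples, and the theorem follows. The task is therefore to produce an indiscernible subsequence of length~$n$ starting from an increasing sequence of length at most~$2^{2^{Cn}}$.

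The naive route---Ramsey's theorem for $k$-tuples with a constant number of colors---produces only a tower-of-height-$k$ bound, which is far too weak. To avoid this, I would exploit the one-dimensional semialgebraic structure by iterated refinement on differences and ratios: on a sufficiently ``rapidly growing'' subsequence the sign of $p(a_{i_1},\dots,a_{i_k})$ is governed by its leading monomial, and hence by very few data. Passing to such a refined subsequence (via repeated Erd\H{o}s--Szekeres on auxiliary sequences of differences or quotients) effectively reduces every $\Phi_j$ to a predicate of small arity; Rosenthal's structural theory of semialgebraic indiscernibles in dimension~$1$, cited in the paper, should guarantee that after a bounded number of such refinements the reduction is complete, leaving only a Ramsey-for-pairs problem to finish.

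The main obstacle is the bookkeeping of sizes. Although each refinement round contributes only a singly exponential Ramsey factor, the auxiliary polynomials one must keep track of (their leading terms, their vanishing loci, the signs of their derivatives, and so on) form a family whose cardinality grows with~$n$, so the final Ramsey-for-pairs problem involves up to $2^{O(n)}$ colors and its Ramsey number is $2^{2^{O(n)}}$. The hardest point, and the one where the doubly exponential bound is actually incurred, is showing that the number of refinement rounds stays linear in~$n$ and independent of the arity and degree of the predicates in $\bPhi$; this is exactly the step where the one-dimensional structure theory is indispensable, which is consistent with the absence of an analogous result in higher dimensions.
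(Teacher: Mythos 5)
Your high-level strategy---reduce to an increasing sequence, pass to a subsequence on which every polynomial occurring in $\bPhi$ has constant sign on all increasing $k$-tuples, then invoke $\ES_\bPhi(k)$ to upgrade ``holds on one $k$-tuple'' to ``holds everywhere''---is exactly the paper's, and the idea that on a rapidly growing subsequence the sign of $p(a_{i_1},\dots,a_{i_k})$ is governed by its lexicographically leading monomial is also the right one. But there is a genuine missing idea. A long increasing sequence need not contain any rapidly growing subsequence at all: consider $A,A+1,\dots,A+N$ with $A\gg N$, or $A+B/1,\dots,A+B/N$. The paper's key device (due to Rosenthal) is that one can always find a subsequence that is the image of an $R$-growing sequence under one of the two $2$-parametric transformations $x\mapsto X+Yx$ and $x\mapsto X+Y/x$, with \emph{unknown} parameters $A,B$. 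After this substitution a polynomial $p$ becomes $\sum_{\alpha} q_\alpha(X,Y)\,y_1^{\alpha_1}\cdots y_k^{\alpha_k}$, and its sign on a $k$-tuple is no longer determined by a leading monomial: it depends on the unknown signs and relative magnitudes of the coefficients $q_\alpha(A,B)$. Handling this requires the ``well-placed''/``type'' machinery (each ratio $q_\alpha(A,B)/q_\beta(A,B)$ must be forced, by a further pigeonhole, to be either far below the first term or far above the last term of the subsequence). Your sketch never confronts this, and without it the claimed sign-constancy on the refined subsequence is simply false.

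Your quantitative accounting is also inconsistent. You posit linearly many (in $n$) refinement rounds, each costing a singly exponential Ramsey factor; composing $\Theta(n)$ exponentials yields a tower of height $\Theta(n)$, not $2^{2^{O(n)}}$. You also invoke a final Ramsey-for-pairs step with $2^{O(n)}$ colors, but there is no reason for the family of auxiliary polynomials to grow with $n$: in the actual proof the numbers of polynomials, of types, and of transformations are all bounded in terms of $\bPhi$ alone. The true source of the double exponential is the composition of exactly \emph{two} exponential-cost refinements: a ``doubling differences'' extraction (a Ramsey-for-triples statement with an explicit ${2n\choose n}$ bound) applied once to the sequence itself to control additive gaps, and once more to the logarithms to control multiplicative ratios, yielding $N=2^{R^{O(n)}}$ with $R=R(\bPhi)$ a constant.
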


The proof is presented in Sections~\ref{s:ra-seq}--\ref{s:rams-pf}.

In  Section~\ref{s:lwb} we will show that some predicates indeed require
a doubly exponential bound.

\begin{prop}\label{p:lwb} There is a finite 
Erd\H{o}s--Szekeres set $\bPhi$
of predicates with $\ES_\bPhi(n)\ge 2^{2^{cn}}$, with
a suitable constant $c>0$.
\end{prop}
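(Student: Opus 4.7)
The plan is to exhibit an explicit doubly-exponential construction showing that Theorem~\ref{t:rams-ub} is tight. Two ingredients are needed: a small set $\bPhi$ of semialgebraic predicates that is Erd\H{o}s--Szekeres, and, for each~$n$, a sequence of length at least $2^{2^{cn}}$ containing no $n$-term subsequence on which any single $\Phi \in \bPhi$ holds everywhere.

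For the predicates I would take $\bPhi = \{\Phi_{+}, \Phi_{-}\}$ to be the two sign variants of a single polynomial $P(x_1, \ldots, x_k)$ of suitably chosen arity and degree. A natural candidate is $k = 4$ with $P$ a low-degree polynomial having two or more monomials whose exponent vectors differ in distinct coordinate positions (for example a discrete analogue of a higher divided difference). That $\bPhi$ is Erd\H{o}s--Szekeres follows from Ramsey's theorem applied to the finite partition of $k$-tuples induced by $\sgn P$, exactly as in the general Ramsey upper bound outlined in the Introduction.

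For the lower bound construction I would use a recursive scheme producing sequences $\seq a^{(n)}$ with $|\seq a^{(n)}| = L(n)$, where the recursion $L(n) = L(n-1)^2$ (with $L(1)$ a fixed constant) yields $L(n) \ge 2^{2^{cn}}$. Concretely, $\seq a^{(n)}$ is the concatenation of $L(n-1)$ affinely rescaled copies of $\seq a^{(n-1)}$, placed into widely separated value ranges so that successive top-level copies live on incomparable orders of magnitude. The inductive claim is that no $n$-term subsequence is $\Phi$-homogeneous for either $\Phi \in \bPhi$. One splits by whether the subsequence is confined to a single top-level copy (in which case the inductive hypothesis on $\seq a^{(n-1)}$, combined with pigeonhole over copies, closes the case) or spreads across several copies (in which case the wide scale separation forces the dominant monomial of $P$ to change between different $k$-subtuples of the subsequence).

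The technical heart---and the step I expect to be the main obstacle---is the cross-scale analysis: one must choose $P$ and the separation ratios so that, for any $n$-subsequence spanning several top-level blocks, two $k$-subtuples can be exhibited on which $\sgn P$ differs. This amounts to a dominant-monomial argument, and is reminiscent of (though technically distinct from) the doubly-exponential lower bound construction of Eli\'a\v{s}--Matou\v{s}ek for third-order monotone point sets in $\R^2$. A careful choice of $P$ whose monomials involve different coordinate positions, together with scale gaps in the recursion large enough that every cross-block transition truly flips the dominant monomial, should make this analysis go through; the decisive point is that the scale separations chosen for the recursion must simultaneously be large enough to force sign flips and small enough not to interfere with the intra-block induction.
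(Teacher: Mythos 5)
Your plan is genuinely different from the paper's, but it has a real gap that you yourself flag: the ``technical heart'' is left undone, and it is precisely the step that carries all the difficulty. You propose an unspecified 4-ary polynomial $P$, a recursive concatenation of rescaled copies with scale separation, and a dominant-monomial argument to force a sign flip across blocks. None of these is pinned down, so the proposal is a research programme rather than a proof. There are also two lurking soft spots in the recursion itself: (a) the within-block case does not immediately follow from ``no $(n-1)$-term monochromatic subsequence in $\seq a^{(n-1)}$'' -- you would need to strengthen the inductive statement or restructure the recursion to rule out $n$-term monochromatic sets inside a single block; and (b) because your $\Phi_{+},\Phi_{-}$ are complementary, any single affine rescaling leaves $\sgn P$ unchanged on all $4$-tuples within a block (the rescaling is the same in each coordinate), so the intra-block analysis is sensitive to exactly how the rescaling is interleaved with the recursion. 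These are the sort of details that the arity-$4$ construction of Conlon--Fox--Pach--Sudakov--Suk (cited in the paper's remark following the statement) had to handle carefully, and without a concrete $P$ one cannot tell whether your scheme can be made to work.

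The paper takes a cleaner and quite different route. It uses three predicates of arity up to $5$ built from the cross-ratio $(x_1,x_2;x_3,x_4)$. Two observations then carry the whole argument. First, the cross-ratio is invariant under the two $2$-parametric transformations $f_1(x,X,Y)=X+Yx$ and $f_2(x,X,Y)=X+Y/x$ from $\FF_0$, so combined with Proposition~\ref{p:rams} this shows, without any case analysis, that the three predicates form an Erd\H{o}s--Szekeres family (one must only check that $\Phi_2$ holds everywhere on $R$-growing sequences, as in Observation~\ref{o:lex-sign}). Second, for the lower bound the extremal example is simply $(1,2,\ldots,N)$: if $\Phi_2$ holds everywhere on an $n$-term subsequence $\seq c$, then the cross-ratios satisfy the recursion $|(c_1,c_2;c_3,c_{i+1})|\ge|(c_1,c_2;c_3,c_i)|^2$, forcing $|(c_1,c_2;c_3,c_n)|\ge 2^{2^{n-4}}$; since all terms are integers bounded by $N$, this gives $N\ge 2^{2^{cn}}$. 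No recursive construction and no cross-scale dominant-monomial analysis is needed. The cost of the paper's approach is a slightly higher arity ($5$ rather than your proposed $4$); what it buys is a short, verifiable argument, exploiting the projective invariance of the cross-ratio to tie the Erd\H{o}s--Szekeres verification directly to the $\FF_0$-embeddings already constructed in Proposition~\ref{p:rams}.
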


The predicates in our proof of this proposition have arity~$5$.
After a preliminary version of the present paper appeared in the arXiv,
Conlon, Fox, Pach, Sudakov, and Suk \cite{cfpss-semialg} provided
a similar construction with arity 4, which is optimal,
since they also proved that every ternary semialgebraic predicate
has at most exponential Ramsey function.

The proof of Theorem~\ref{t:rams-ub} also immediately provides
a quantitative result for homogeneous subsequences for every 
semialgebraic predicate, Erd\H{o}s--Szekeres or not.

\begin{prop}\label{p:homog} For every finite set $\bPhi$
of $1$-dimensional semialgebraic 
predicates there is a constant $C=C(\bPhi)$ such that for every
integer $n$, every sequence $\seq a$ of length $2^{2^{Cn}}$
contains an $n$-term subsequence $\seq b$ such that each
predicate $\Phi\in\bPhi$ holds either everywhere on $\seq b$
or nowhere on~$\seq b$.
\end{prop}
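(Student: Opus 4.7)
The plan is to deduce Proposition~\ref{p:homog} directly from Theorem~\ref{t:rams-ub} applied to a suitably enlarged predicate set. The key observation is that the assertion ``every $\Phi\in\bPhi$ holds everywhere or nowhere on $\seq b$'' is equivalent to saying that one fixed sign pattern of the $\Phi_j$'s and their negations holds on every increasing tuple of $\seq b$; thus homogeneity for $\bPhi$ is an Erd\H{o}s--Szekeres property for the set of atomic conjunctions, and this is the only reformulation needed.

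Concretely, write $\bPhi=\{\Phi_1,\ldots,\Phi_m\}$, let $k_j$ be the arity of $\Phi_j$ and set $k:=\max_j k_j$. For each sign vector $\sigma\in\{0,1\}^m$ form the $k$-ary semialgebraic predicate
\[
\Phi_\sigma(x_1,\ldots,x_k)\;:=\;\bigwedge_{j=1}^m \Phi_j^{\sigma_j}(x_1,\ldots,x_{k_j}),
\]
where $\Phi_j^{1}=\Phi_j$ and $\Phi_j^{0}=\neg\Phi_j$ (the extra variables $x_{k_j+1},\ldots,x_k$ simply do not occur in $\Phi_j^{\sigma_j}$), and put $\bPhi^{*}:=\{\Phi_\sigma:\sigma\in\{0,1\}^m\}$. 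Each $\Phi_\sigma$ is a Boolean combination of polynomial (in)equalities, so $\bPhi^{*}$ is a finite set of semialgebraic predicates. On any given increasing $k$-tuple exactly one $\Phi_\sigma$ holds, so coloring $k$-tuples by the unique $\sigma$ that works and invoking Ramsey's theorem for $k$-tuples with $2^m$ colors immediately shows that $\bPhi^{*}$ is Erd\H{o}s--Szekeres.

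I would then apply Theorem~\ref{t:rams-ub} to $\bPhi^{*}$ to obtain a constant $C$---which we declare to be $C(\bPhi)$---such that $\ES_{\bPhi^{*}}(n)\le 2^{2^{Cn}}$. Applied to the given sequence $\seq a$ (assuming $n\ge k$, else the statement is vacuous), this yields an $n$-term subsequence $\seq b$ on which some fixed $\Phi_\sigma$ holds on every increasing $k$-tuple. Since every increasing $k_j$-tuple in $\seq b$ extends to an increasing $k$-tuple, it follows that each $\Phi_j$ holds everywhere on $\seq b$ when $\sigma_j=1$ and nowhere when $\sigma_j=0$, which is exactly the desired homogeneity.

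There is no substantive obstacle in this argument: because $\bPhi^{*}$ is tautologically Erd\H{o}s--Szekeres, all the analytic work has already been carried out inside Theorem~\ref{t:rams-ub}, and the present proposition is essentially a packaging statement. The only minor technicality---that predicates in $\bPhi$ may have different arities---is absorbed by the padding in the definition of $\Phi_\sigma$ above.
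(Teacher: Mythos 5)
Your argument is correct in substance, but it takes a genuinely different route from the paper. The paper proves Proposition~\ref{p:homog} directly from Corollary~\ref{c:rams} and Lemma~\ref{l:type-dec}: the well-placed $R$-growing sequence produced by the corollary is, by the lemma, automatically homogeneous for every predicate of $\bPhi$ simultaneously, so no auxiliary predicate family is needed. You instead reduce the proposition to the \emph{statement} of Theorem~\ref{t:rams-ub}, applied to the family $\bPhi^{*}$ of sign patterns $\Phi_\sigma$, which is tautologically Erd\H{o}s--Szekeres by Ramsey's theorem. This is a legitimate black-box reduction, and there is no circularity, since the paper's proof of Theorem~\ref{t:rams-ub} does not use Proposition~\ref{p:homog}. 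What your route buys is that the machinery of Sections~\ref{s:ra-seq}--\ref{s:pfT2} never has to be reopened; what it costs is a worse constant and the padding issues below. The paper's route is shorter because homogeneity for all of $\bPhi$ at once is exactly what Lemma~\ref{l:type-dec} already delivers---from the paper's viewpoint, Theorem~\ref{t:rams-ub} is the corollary of that fact, not the other way around.

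Two boundary points in your write-up need patching, though both are routine. First, it is not true that every increasing $k_j$-tuple of $\seq b$ extends to an increasing $k$-tuple: a $k_j$-tuple whose last index exceeds $n-(k-k_j)$ cannot be completed on the right, so your argument as written says nothing about $\Phi_j$ on such tuples. The fix is to apply Theorem~\ref{t:rams-ub} with $n+k$ in place of $n$ and keep only the first $n$ terms of the resulting subsequence, absorbing the change into $C$. Second, the case $n<k$ is not vacuous: for $k_j\le n<k$ the predicate $\Phi_j$ still has content on an $n$-term sequence, whereas ``$\Phi_\sigma$ holds everywhere'' on a sequence of length less than $k$ is an empty condition and gives you nothing. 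These finitely many values of $n$ can be handled by Ramsey's theorem directly, again at the cost of enlarging $C$. With these two adjustments the proof is complete.
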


By adding some more ingredients to the proof of Theorem~\ref{t:rams-ub}
in Sections~\ref{s:algo} and~\ref{s:feas}, we 
will also prove that $1$-di\-mension\-al 
Erd\H{o}s--Szekeres predicates can be recognized algorithmically.

\begin{theorem}\label{t:es1} 
There is an algorithm that, given a finite set $\bPhi$ of $1$-di\-mension\-al
semialgebraic predicates, decides whether it is Erd\H{o}s--Szekeres.
\end{theorem}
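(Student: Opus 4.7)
The plan is to extract a decision procedure from the structural machinery that powers Theorem~\ref{t:rams-ub}. That proof, built on Rosenthal's classification of indiscernible sequences in dimension~$1$, should implicitly produce a finite collection of \emph{canonical sequence types} $T_1,\ldots,T_M$---parameterized semialgebraic families of sequences---with the property that every sufficiently long real sequence contains a long subsequence realizing some $T_i$. The first step is to verify that this collection is effectively computable from $\bPhi$: each $T_i$ is described by a semialgebraic formula in finitely many real parameters, and both $M$ and the complexity of the $T_i$ are bounded by functions of $\bPhi$ that an algorithm can output.

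Given such a list, I would prove the characterization \emph{$\bPhi$ is Erd\H{o}s--Szekeres if and only if for every $T_i$ there exists $\Phi_j \in \bPhi$ such that $\Phi_j$ holds on every increasing $k$-tuple of every instance of $T_i$.} The ``if'' direction combines the covering property with Proposition~\ref{p:homog}: a sufficiently long sequence contains a subsequence realizing some $T_i$, and then the chosen $\Phi_j$ automatically holds everywhere on that subsequence. For the ``only if'' direction, if no $\Phi_j$ captures $T_i$, arbitrarily long concrete instances of $T_i$ serve as witnesses of non-ES.

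The third step invokes Tarski's theorem. For each pair $(T_i,\Phi_j)$, the statement ``$\Phi_j$ holds on every increasing $k$-tuple of every instance of $T_i$'' is a universal first-order semialgebraic assertion over the reals---quantify universally over the template parameters and over the choice of indices $i_1<\cdots<i_k$---and is therefore algorithmically decidable. The Erd\H{o}s--Szekeres property thus reduces to finitely many Tarski queries, one per pair $(T_i,\Phi_j)$, and a conjunction/disjunction of their answers gives the final decision.

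The principal obstacle will be making the classification of canonical sequence types truly effective. One must unwind the structural argument behind Theorem~\ref{t:rams-ub} into an explicit procedure that outputs $T_1,\ldots,T_M$ together with computable bounds on the number of parameters and on the degrees of the polynomials defining each $T_i$, and must also check that the feasibility of each $T_i$ (whether arbitrarily long instances really exist) is algorithmically verifiable from the same data. This is what Sections~\ref{s:algo} and~\ref{s:feas} are designed to deliver; once the list is in hand the Tarski queries are routine, so the genuine algorithmic content lies in rendering the underlying classification uniform and constructive in~$\bPhi$.
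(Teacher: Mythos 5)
Your high-level architecture matches the paper's: reduce to a finite, computable list of parameterized ``templates'' covering all sufficiently long sequences, then check each template against each predicate. But the step you declare routine---``the statement `$\Phi_j$ holds on every increasing $k$-tuple of every instance of $T_i$' is a universal first-order semialgebraic assertion \ldots{} therefore algorithmically decidable''---is precisely where the work lies, and as written it fails. An instance of a template is a sequence of unbounded length $n$: in the paper's classification it has the form $f(\seq b,A,B)$ with $f\in\FF_0$, two real parameters $A,B$, and an $R$-growing sequence $\seq b$ contributing $n$ further free real variables. A first-order formula over the reals cannot quantify over all such instances for all $n$ simultaneously, nor over ``the choice of indices $i_1<\cdots<i_k$'' within a sequence of unbounded length. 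The paper's resolution is not a Tarski query at this stage but Lemma~\ref{l:type-dec}: on an $R$-growing sequence that is $R$-well-placed, the sign of every polynomial arising from $\bPhi$ after the substitution $x_i=f(y_i,X,Y)$ is determined purely combinatorially by a finite \emph{type} (the signs of the coefficient polynomials $q_\alpha(A,B)$ together with the dwarfed/gigantic pattern of the ratios $\rho_{\alpha\beta}$). This collapses the universal quantification over instances to a finite case analysis over candidate types.

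The second, and harder, missing piece is the feasibility check you defer to ``the principal obstacle'': deciding whether a given candidate type admits arbitrarily long realizations. Written out naively (the paper's formula $\Psi$ in Section~\ref{s:feas}), this involves a quantifier over $n\in\N$ and an exponential gap condition $H\ge L^{R^{n+2}}$ between the dwarfed and the gigantic ratios; neither quantification over $\N$ nor the exponential belongs to the first-order theory of the reals. The paper's Lemma~\ref{l:equiv} shows $\Psi$ is equivalent to a genuinely first-order formula $\Psi^*$, and the proof of that equivalence is not formal bookkeeping: it rests on the fact that a semialgebraic function has at most polynomial growth. Without this argument, or a substitute for it, your reduction to ``finitely many Tarski queries'' does not go through; with these two ingredients supplied, your outline becomes essentially the algorithm of Section~\ref{s:algo}.
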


\heading{Higher dimensions: algebraic predicates
 and the multipartite case. } For $d\ge 2$, all of the three
 questions above appear much harder than
for $d=1$, and at the time of writing this paper we have
just some preliminary results for $d=2$.

Here we provide decidability results with $d$ arbitrary
for a restricted class of predicates, as well as for 
general semialgebraic predicates but with a different
Ramsey-type question.

Let us define an \emph{algebraic predicate}, a special
kind of  semialgebraic predicate, as 
a $d$-di\-mension\-al predicate
$\Phi(\xx_1,\ldots,\xx_k)$ expressible as a conjunction
of polynomial equations in the coordinates
of $\xx_1,\ldots,\xx_k$ (with rational coefficients).
We note that while a semialgebraic predicate defines a semialgebraic
set (in $(\R^d)^k$), an algebraic predicate defines an algebraic variety.

For algebraic predicates the question of being Erd\H{o}s--Szekeres
does not make much sense, since every nontrivial algebraic
predicate fails on a generic $k$-tuple of points.
However, the question of whether there exist arbitrarily long
point sequences, or even infinite ones, on which a given algebraic 
predicate holds everywhere, is meaningful. The following theorem
shows that it is decidable.

\begin{theorem}[``Effective compactness'' for algebraic predicates]
\label{t:algpred} For every $d$, $D$, and $k$ there exists
$N$, for which an explicit bound can be given,
 such that for every $d$-di\-mension\-al $k$-ary algebraic predicate
$\Phi$, in which all the polynomials have degree at most~$D$,
the following two conditions are equivalent:
\begin{enumerate}
\setlength{\itemsep}{1pt}
\setlength{\parskip}{0pt}
\setlength{\parsep}{0pt}
\item[\rm (i)] There exists a sequence $\seq\aa$ of $N$ points in
$\R^d$ on which $\Phi$ holds everywhere.
\item[\rm(ii)] There exists an infinite sequence $\seq\aa$
of points in $\R^d$ on which $\Phi$ holds everywhere.
\end{enumerate}
\end{theorem}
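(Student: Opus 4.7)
The direction (ii) $\Rightarrow$ (i) is immediate: any initial segment of length $N$ of an infinite good sequence is a finite good sequence. For the converse (i) $\Rightarrow$ (ii), my plan is an effective Noetherianity argument combined with a K\"{o}nig-style extension.

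As a first normalization, one may replace the conjunction of polynomial equations $p_1=\cdots=p_s=0$ defining $\Phi$ by the single equation $\sum_j p_j^2 = 0$, of degree at most $2D$, reducing to the case where $\Phi$ is given by a single polynomial $p$. For each $n \ge k$, let $V_n \subseteq (\R^d)^n$ be the real algebraic variety of length-$n$ good sequences, and let $\pi_n^{(m)}\colon V_{n+m}\to V_n$ denote the projection to the first $n$ coordinates. Consider the decreasing chain of Zariski closures
\[
V_n \;\supseteq\; \overline{\pi_n^{(1)}(V_{n+1})} \;\supseteq\; \overline{\pi_n^{(2)}(V_{n+2})} \;\supseteq\; \cdots
\]
in $(\R^d)^n$. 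Since $\R[x_1,\ldots,x_{nd}]$ is Noetherian (Hilbert's basis theorem), this chain stabilizes at some index $m^* = m^*(n,d,D,k)$ to a Zariski-closed set $W_n \subseteq V_n$; an explicit bound on $m^*$ follows from effective versions of Hilbert's basis theorem, or equivalently from effective Tarski--Seidenberg quantifier elimination applied to the formula ``$\aa$ extends by at least $m$ more terms.''

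Given the stable closures $W_n$, the plan is to iteratively extend a real point of $V_N$, for $N$ chosen larger than $n_0 + m^*(n_0,\ldots)$ for a suitable base index $n_0$, to an infinite good sequence: because $\overline{\pi_n^{(1)}(W_{n+1})} = W_n$, the projection $W_{n+1}\to W_n$ is dominant, so real preimages of a given real point of $W_n$ exist over a Zariski-dense subset, and stabilization ensures that a generic lift itself lifts again, all the way up the tower. A K\"{o}nig-style choice then assembles these real lifts into an infinite sequence in $(\R^d)^{\N}$ on which $\Phi$ holds everywhere.

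The main obstacle is the gap between Zariski closures (where Noetherianity applies cleanly) and actual real points (which the theorem demands): a real point of $W_n$ might in principle lack a real preimage in $W_{n+1}$, even though such preimages exist generically. I would handle this by stratifying $W_n$ according to the real dimension of the fibers of $\pi_n^{(1)}$, and inducting either on $d$ or on $\dim W_n$, restricting at each inductive level to a real subvariety on which real lifts provably exist; an alternative route is to work over $\C$ with projective closures (where Chevalley's theorem and compactness give an infinite complex sequence readily) and then descend to $\R$ via conjugation invariance of the relevant strata. Either way, tracking the complexity through effective Tarski--Seidenberg quantifier elimination produces the explicit (typically singly exponential) bound on $N(d,D,k)$.
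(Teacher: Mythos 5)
Your direction (ii)$\Rightarrow$(i) is fine. The gap is in (i)$\Rightarrow$(ii), and it is exactly the one you flag yourself and then leave unresolved: the passage from Zariski closures back to real points. The stabilized set $W_n=\overline{\pi_n^{(m)}(V_{n+m})}$ records only the Zariski closure of the set of length-$n$ prefixes that extend by $m$ further real terms; since the image of a real variety under a projection is merely semialgebraic (e.g.\ $\{(x,y):y^2=x\}$ projects onto $[0,\infty)$, whose Zariski closure is all of $\R$), the genuinely extendable locus can be a thin semialgebraic subset of $W_n$, and the real point supplied by hypothesis (i), as well as each successive lift, may fail to lie in it. ``A generic lift itself lifts again'' does not rescue this: the fibre over your particular point may consist of finitely many real points, none of them generic in $W_{n+1}$. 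Nor does any K\"onig-type argument apply to the chain of actual images $\pi_n^{(m)}(V_{n+m})$, which are non-compact semialgebraic sets whose decreasing intersection can be empty even when every term is nonempty. Your two proposed repairs are not carried out: the stratification by real fibre dimension would have to be made coherent across all levels $n$ simultaneously (a stratum on which real lifts exist at level $n$ need not map into such a stratum at level $n+1$), and the complex route only yields an infinite \emph{complex} sequence, which does not descend to $\R$ by conjugation invariance---the whole difficulty is that a real prefix may extend by complex points but not by real ones. A telling symptom is that your skeleton never isolates where algebraicity, as opposed to semialgebraicity, is essentially used, whereas the statement is false for semialgebraic predicates (the paper's example $(x_1>0)\wedge(x_2\ge x_1+1)\wedge(x_2\le 2x_1)$). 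A secondary issue is that the uniform bound on the stabilization index $m^*$ is not an off-the-shelf consequence of an effective Hilbert basis theorem, since the defining degrees of the closures $\overline{\pi_n^{(m)}(V_{n+m})}$ grow with $m$.

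The ingredient you are missing is the one the paper's proof is built on: a real variety cut out by boundedly many polynomials of bounded degree has a bounded number of connected components, so once it contains more than that many points it contains an arc and is therefore infinite. Concretely, the paper first uses a bounded-dimension pigeonhole on nested varieties to upgrade a long good sequence to a large set $B$ on which $\Phi$ holds everywhere on $B^k$, then symmetrizes $\Phi$ and runs the multipartite construction of Theorem~\ref{t:multip} to produce a single infinite variety $V$ with $\Phi$ holding everywhere on $V^k$, from which an infinite sequence is read off. If you want to retain a lifting/Noetherianity flavour, you would at minimum have to work with the actual semialgebraic images rather than their Zariski closures and prove a substitute compactness statement for them; that is where all of the work lies.
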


Since condition (i) can be tested using a decision algorithm
for the first-order theory of the reals, we obtain a decision
algorithm for testing the existence of infinite sequences,
or equivalently, of arbitrarily long sequences, on which
a given algebraic predicate holds everywhere.

Let us remark that the statement of the theorem also holds
with the real numbers replaced by the complex numbers.

We note that a statement analogous to Theorem~\ref{t:algpred}
fails badly for semialgebraic predicates. Indeed, the $1$-di\-mension\-al
binary predicate  $\Phi(x_1,x_2):=(x_1>0)\wedge (x_2\ge x_1+1)\wedge
(x_2\le 2x_1)$ holds everywhere on arbitrarily long finite sequences, 
such as $(n,n+1,\ldots,2n)$, but
on no infinite sequence.  Similarly, $(x_1>0)\wedge
(x_2\ge x_1+1)\wedge (x_2\le A)$, where $A$ is a constant,
admits $A$-term sequences but not longer.

Next, we let $\Phi(\xx_1,\ldots,\xx_k)$ be a $d$-di\-mension\-al
semialgebraic predicate, but we ask a different question.
Let $A_1,\ldots,A_k$ be point \emph{sets} in $\R^d$.
We say that $\Phi$ \emph{holds everywhere on $A_1\times\cdots\times A_k$}
if $\Phi(\aa_1,\ldots,\aa_k)$ holds for every choice of points
$\aa_1\in A_1$,\ldots, $\aa_k\in A_k$. We have the following
analog of Theorem~\ref{t:algpred}.

\begin{theorem}[``Effective compactness'' for the multipartite setting]
\label{t:multip} For every $d$, $D$, and $k$ there exists
$N$, for which an explicit bound can be given,
 such that for every $d$-di\-mension\-al $k$-ary semialgebraic predicate
$\Phi$ that is a conjunction of polynomial equations and
inequalities and in which all the polynomials have degree at most~$D$,
the following two conditions are equivalent:
\begin{enumerate}
\setlength{\itemsep}{1pt}
\setlength{\parskip}{0pt}
\setlength{\parsep}{0pt}
\item[\rm (i)] There exist $N$-point sets $A_1,\ldots,A_k$ in
$\R^d$ such that $\Phi$ holds everywhere on $A_1\times\cdots\times A_k$.
\item[\rm(ii)] There exist infinite sets $X_1,\ldots,X_k$ in $\R^d$
such that $\Phi$ holds everywhere on $X_1\times\cdots\times X_k$.
\end{enumerate}
For an arbitrary $d$-di\-mension\-al $k$-ary semialgebraic predicate
$\Phi$ a similar statement holds, except that $N$ also depends
on the length of $\Phi$, again in a way that could be made
explicit.
\end{theorem}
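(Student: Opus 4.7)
The implication (ii) $\Rightarrow$ (i) is immediate by restriction, so the plan focuses on (i) $\Rightarrow$ (ii). The strategy parallels that of Theorem~\ref{t:algpred}: apply a cylindrical algebraic decomposition (CAD) to the ambient space $(\R^d)^k$, use a multipartite Ramsey-type argument to localize an $N$-point witness $A_1\times\cdots\times A_k\subseteq S$ (where $S=\{(\xx_1,\ldots,\xx_k):\Phi(\xx_1,\ldots,\xx_k)\}$) into a single cell, and then upgrade this localized witness into an infinite product $X_1\times\cdots\times X_k$ contained in the same cell. The closed case (non-strict inequalities only) is the core; the general case is recovered by working with the closure $\bar S$.

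First I would apply a CAD of $(\R^d)^k$ compatible with the polynomials appearing in $\Phi$. This partitions the space into a bounded number $r$ of semialgebraic cells, each either entirely inside $\bar S$ or disjoint from $\bar S$. Here $r=r(d,D,k)$ in the conjunctive case and $r$ additionally depends on the length of $\Phi$ in the general semialgebraic case. A witness $A_1\times\cdots\times A_k\subseteq S$ then induces an $r$-coloring of the combinatorial box $A_1\times\cdots\times A_k$. An iterated application of bipartite Ramsey between successive pairs of factors yields, provided $N$ is at least a suitable tower function of $m$, $r$, and $k$, subsets $A_i'\subseteq A_i$ with $|A_i'|=m$ whose product lies in a single cell $C\subseteq \bar S$.

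The hard part is upgrading ``the cell $C$ contains a box of side $m$'' into ``$C$ contains an infinite product''. The plan is induction on $k$, using the cylindrical structure of $C$ together with Hardt's semialgebraic triviality theorem. Projecting $C$ onto the first factor yields a base $C_1\subseteq\R^d$; Hardt's theorem partitions $C_1$ into finitely many semialgebraic pieces over which $C$ trivializes, so that fibers are homeomorphic along each piece. Pigeonholing $A_1'$ into one such piece, and then refining to a sub-piece on which the trivializing homeomorphism may be arranged to fix a common sub-fiber, reduces the problem to a $(k-1)$-partite instance on that common fiber. The principal obstacle is quantitative: the initial $m$ (and hence $N$) must be chosen large enough that the successive pigeonholings, each costing a factor controlled by the complexity of the Hardt decomposition, still leave $m\ge 2$ at depth $k$. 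Since the complexity of Hardt's decomposition is explicitly bounded in terms of $d$, $D$, and $k$, this induction delivers an explicit $N$.

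Finally, for predicates involving strict inequalities or general semialgebraic $\Phi$, I would run the argument on the closed predicate $\bar\Phi$ obtained by weakening each strict inequality, producing infinite sets whose product lies in $\bar S$. Within the CAD cell $C$ eventually used, each polynomial appearing in $\Phi$ has constant sign, so every strict inequality in $\Phi$ holds everywhere or nowhere on $C$; if $C$ is used at all it must be contained in $S$ (not merely in $\bar S$), and the infinite witnesses obtained from the closed argument lie in $S$ automatically. This step absorbs the length of $\Phi$ into the final bound $N$ in the way the theorem statement indicates.
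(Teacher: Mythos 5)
Your reduction to a single sign-invariant cell is fine (it plays the same role as the paper's use of the $k$-partite Ramsey theorem to isolate one conjunctive clause), and the closure trick for strict inequalities is recoverable. The genuine gap is in the inductive step that is supposed to turn a large finite box inside a cell $C$ into an infinite product. Hardt's theorem gives a homeomorphism $C\cap\pi^{-1}(T)\cong T\times F$ over a piece $T$ of the base, but this homeomorphism carries the finite witness $A_2'\times\cdots\times A_k'$, which sits in every fiber $C_{\aa}$ for $\aa\in A_1'\cap T$, to \emph{different} subsets of the reference fiber $F$ for different $\aa$; nothing forces a ``common sub-fiber,'' and the claim that ``the trivializing homeomorphism may be arranged to fix a common sub-fiber'' has no justification. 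A concrete illustration: $C=\{(x,y)\in\R^2: 0<y<x\}$ is a single sign-invariant cell, trivial over $(0,\infty)$ via $(x,y)\mapsto(x,y/x)$; no point of the fiber is fixed by the trivialization, and the intersection of the fibers over a finite set $A_1$ is $(0,\min A_1)$, which shrinks without bound as smaller elements are added. So the obstacle is not merely the quantitative bookkeeping of pigeonhole losses: one needs a mechanism guaranteeing that the intersection of the fibers over a suitably chosen \emph{infinite} base set still contains a large box, and Hardt's theorem does not supply one for general semialgebraic fibers.

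This is precisely why the paper splits $\Phi$ into its equational part and its strict-inequality part. For the equational part the fibers are varieties cut out by polynomials of degree at most $D$, and the bounded-dimension (linear-algebra) argument shows that the intersection of arbitrarily many such fibers equals the intersection of a bounded number of them; combined with quantifier elimination this keeps the sets $V_{i,J}$ describable by bounded-size formulas, so a bound on their number of connected components forces them to be infinite once $N$ exceeds that bound, and the box structure comes for free from the definition of the $V_{i,J}$. No analogous stabilization holds for intersections of general semialgebraic fibers (see the example above), and the strict inequalities are then handled separately by an openness argument at a cluster point of the curves obtained. To salvage your plan you would need to replace the Hardt step by such a stabilization argument for the equational constraints and a separate local argument for the strict ones---at which point you would essentially have reconstructed the paper's proof.
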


Theorems~\ref{t:algpred} and~\ref{t:multip} are proved in 
Section~\ref{s:multip}; the proofs are similar and more or less
independent of the rest of the paper.

\section{Ramseying for fast-growing sequences}\label{s:ra-seq}

Here we work in the 1-di\-mension\-al setting,
i.e., with sequences of real numbers, and we
begin with preparations for the proofs of
Theorems~\ref{t:rams-ub} and~\ref{t:es1}.

\paragraph{\boldmath $R$-growing sequences. }
The first idea in our approach is that if $\Phi$ is
a 1-di\-mension\-al semialgebraic predicate and $\seq a$ is a sequence that grows
sufficiently fast (where the speed of growth 
is quantified with respect to $\Phi$), 
then either $\Phi$ holds everywhere on $\seq a$,
or $\Phi$ holds nowhere on $\seq a$ (by which we mean that 
the negation $\neg\Phi$ holds everywhere on $\seq a$).
Moreover, one can decide between these two possibilities
``syntactically'', just from the structure of $\Phi$, 
without any information about $\seq a$ (except for a
guarantee of the fast growth).

This will be expressed more precisely below, but first
we formalize ``growing sufficiently fast''.

\begin{definition}\label{d:Rgrow}
Let $R>2$ be a real number. 
We call a sequence $\seq a=(a_1,a_2,\ldots,a_n)$
\emph{$R$-growing} if $a_1\ge R$ and $a_{i+1}\ge a_i^R$,
$i=1,2,\ldots,n-1$.
\end{definition}


\begin{observation}\label{o:lex-sign}
For every $1$-di\-mension\-al semialgebraic
predicate $\Phi=\Phi(x_1,\ldots,x_k)$ there 
exists $R>1$ such that
either $\Phi$ holds everywhere on every $R$-growing 
sequence $\seq a$, or $\Phi$ holds nowhere on every such $\seq a$. 
 Moreover, it is easy to decide by inspection of $\Phi$ which of these
cases holds (without explicit knowledge of~$R$).
\end{observation}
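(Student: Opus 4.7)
My plan is to reduce the statement to a single atomic predicate of the form $p(x_1,\ldots,x_k)\mathrel{\rho} 0$ with $\rho\in\{<,>,=\}$, and then control the sign of $p$ on any $R$-growing sequence through a syntactically identifiable ``leading monomial''.

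First, write $\Phi$ as a Boolean combination of finitely many atomic predicates $A_1,\ldots,A_t$. If each $A_i$ is constantly true or constantly false on every $R_i$-growing sequence, with the verdict readable from $A_i$ alone, then setting $R:=\max_i R_i$ makes the same true for $\Phi$: its truth value is obtained by evaluating the Boolean formula on the atomic verdicts. So it suffices to handle one atom.

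For an atom involving $p=\sum_\alpha c_\alpha x_1^{\alpha_1}\cdots x_k^{\alpha_k}$, order exponent tuples by the \emph{reverse lexicographic} order: $\alpha\prec\beta$ iff the \emph{largest} coordinate $j$ where $\alpha$ and $\beta$ differ satisfies $\alpha_j<\beta_j$. If $p\equiv 0$, then $p=0$ holds everywhere and the strict inequalities hold nowhere. Otherwise let $\alpha^*$ be the $\prec$-maximal exponent with $c_{\alpha^*}\neq 0$. The key claim is that for $R$ sufficiently large (in terms of the coefficients and the maximum exponent of $p$), for every $R$-growing sequence $\seq a$ and every increasing $k$-tuple $i_1<\cdots<i_k$,
$$
\sgn p(a_{i_1},\ldots,a_{i_k})=\sgn c_{\alpha^*}.
$$
This settles each of $<,>,=$ in a way that depends only on $\alpha^*$ and $\sgn c_{\alpha^*}$, both of which can be extracted from $p$.

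To prove the claim, fix any $\beta\prec\alpha^*$ with $c_\beta\neq 0$ and let $j^*$ be the largest coordinate where $\alpha^*$ and $\beta$ differ, so $\alpha^*_{j^*}-\beta_{j^*}\ge 1$ and $\alpha^*_j=\beta_j$ for $j>j^*$. For $j<j^*$, iterating $a_{\ell+1}\ge a_\ell^R$ gives $a_{i_{j^*}}\ge a_{i_j}^R$, hence $a_{i_j}\le a_{i_{j^*}}^{1/R}$; a quick sign check yields $a_{i_j}^{\alpha^*_j-\beta_j}\ge a_{i_{j^*}}^{-|\alpha^*_j-\beta_j|/R}$ irrespective of the sign of the exponent. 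Letting $M$ bound all coordinates of all exponents of $p$, multiplying over $j<j^*$ gives
$$
\frac{\bigl|a_{i_1}^{\alpha^*_1}\cdots a_{i_k}^{\alpha^*_k}\bigr|}{\bigl|a_{i_1}^{\beta_1}\cdots a_{i_k}^{\beta_k}\bigr|}\ge a_{i_{j^*}}^{(\alpha^*_{j^*}-\beta_{j^*})-kM/R}\ge a_{i_{j^*}}^{1/2}\ge R^{1/2}
$$
once $R\ge 2kM$. Choosing $R$ still larger, so that $R^{1/2}$ exceeds the number of monomials of $p$ times $\max_\alpha|c_\alpha|/|c_{\alpha^*}|$, forces the leading monomial to dominate the sum of all others in absolute value, whence $\sgn p=\sgn c_{\alpha^*}$. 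The one point needing care is precisely this bookkeeping of exponents: one must verify that the ``wrong-sign'' contributions at indices $j<j^*$ cannot swamp the decisive factor $a_{i_{j^*}}^{\alpha^*_{j^*}-\beta_{j^*}}$, but the super-exponential separation built into $R$-growing sequences makes the margin comfortable.
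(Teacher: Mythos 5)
Your proposal is correct and follows essentially the same route as the paper's sketch: identify the dominant monomial via the ordering that prioritizes the last (largest) variable, show its coefficient's sign determines $\sgn p$ on every increasing $k$-tuple of an $R$-growing sequence for $R$ large in terms of the degrees and coefficients of $p$, and then evaluate the Boolean combination on the resulting atomic verdicts. Your write-up merely supplies the exponent bookkeeping that the paper leaves implicit, and it does so correctly.
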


\begin{proof}[Sketch of proof. ] First we note that
if $p(x_1,\ldots,x_k)$ is a polynomial and $R$ is sufficiently
large in terms of the degree and maximum absolute value of the
coefficients of $p$, then the sign of $p(a_{i_1},a_{i_2},\ldots,a_{i_k})$,
$i_1<\cdots<i_k$, is given by the sign of the coefficient
of the lexicographically largest monomial present in $p(x_1,\ldots,x_k)$
(where a monomial $x_1^{\alpha_1}\cdots x_k^{\alpha_k}$
is lexicographically larger than $x_1^{\beta_1}\cdots x_k^{\beta_k}$
if the vector $(\alpha_k,\ldots,\alpha_1)$ precedes
$(\beta_k,\ldots,\beta_1)$ in the usual lexicographic ordering).
 The validity of
$\Phi$ can then be resolved based on the signs of the polynomials
appearing in~it.
\end{proof}

\paragraph{A Ramsey-type result with \boldmath$R$-growing sequences. }
The next idea  in our approach is that \emph{every} 
sufficiently long sequence $\seq a$ ``contains''
a long $R$-growing sequence, with a suitable meaning of
``contains''. 

Of course, not all long sequences contain $R$-growing subsequences;
at the very least, we also have to consider reversals of $R$-growing
sequences. Perhaps less obviously, we have sequences like
$$
A,A+1,A+2,\ldots,A+N,
$$
where $A$ is a number larger than $N$,
which contain neither fast-growing subsequences nor their reversals.

In this case, we can find a \emph{translate} of an $R$-growing
sequence, of the form $A+b_1,A+b_2,\ldots,A+b_n$, as a subsequence. 
For a slightly more sophisticated example, we consider the sequence
$$
A+\frac B1,A+\frac B2,\ldots,A+\frac BN,
$$
with two large parameters $A,B$. Here we have a subsequence
obtained from an $R$-growing sequence $(b_1,\ldots,b_n)$ by the
rational transformation $x\mapsto A+\frac Bx$.

A key insight, coming from the thesis of Rosenthal \cite{Rosenthal-thesis},
is that it suffices to consider rational transformations
of simple form and with at most two parameters (like $A$ and $B$ above).
To state this formally, we introduce the next definition.

\begin{definition} 
A \emph{$t$-parametric transformation} is a rational
function $f=f(x,X_1,\ldots,X_t)$ in $t+1$ variables.

Let $\seq b$ be a sequence of length $n$, let $f$ 
be a $t$-parametric transformation, and let $A_1,\ldots,A_t$
be real numbers. We write $f(\seq b,A_1,\ldots,A_t)$
for the sequence whose $i$th term is 
$f(b_i,A_1,\ldots,A_t)$ for all $i=1,2,\ldots,n$.

If $\seq a$ is a sequence of length $N$, we say that
$\seq b$ as above has a \emph{$t$-parametric embedding 
into $\seq a$ via $f$} if, for some $A_1,\ldots,A_t\in\R$,
$f(\seq b,A_1,\ldots,A_t)$  is a subsequence of~$\seq a$.

Finally, if $\FF$ is a set of $t$-parametric transformations,
we say that $\seq b$ has a \emph{$t$-parametric $\FF$-embedding}
into $\seq a$ if it has a $t$-parametric embedding
into $\seq a$ via some $f\in\FF$.
\end{definition}

Now we can state a key Ramsey-type result.
Let $\FF_0$ stand for the set of the
following two $2$-parametric transformations:
$$
f_1(x,X,Y):= X+Yx,\ \ \ \  f_2(x,X,Y):=X+\frac Yx.
$$

\begin{prop}\label{p:rams}
For every $n$ and $R$ there exists $N$ such that
for every sequence $\seq a$ of length $N$ there is an $R$-growing
sequence $\seq b$ of length $n$ such that $\seq b$ or $\rev (\seq b)$,
the reversal of $\seq b$, has a $2$-parametric $\FF_0$-embedding 
into~$\seq a$. Quantitatively, it suffices to take
\[
N:= 2^{R^{2n}},
\]
provided that $R\ge R_0$ for a sufficiently large constant~$R_0$.
\end{prop}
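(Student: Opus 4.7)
The plan is to reduce to a monotone subsequence via the classical Erd\H{o}s--Szekeres theorem, dichotomize based on the range of values, and find the $R$-growing embedding either directly via $f_1$ or via a nested-interval construction using $f_2$.

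First I would invoke Theorem~\ref{t:es-monot} on $\seq a$: from $N = 2^{R^{2n}}$ elements one extracts a monotone subsequence of length $M \geq 2^{R^{2n}/2}$. Reversing $\seq a$ if necessary (which swaps $\seq b$ with $\rev \seq b$ in the conclusion), assume $\seq c = (c_1, \ldots, c_M)$ is strictly increasing, and let $D := c_M - c_1$. In the \emph{tower-range case} $D \geq R^{R^{n-1}}$, I would set $X = c_1 - R$, $Y = 1$, and greedily build $i_1 = 1 < i_2 < \cdots < i_n$, each $i_{j+1}$ being the smallest index with $c_{i_{j+1}} - X \geq (c_{i_j} - X)^R$. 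Since $c_M - X \geq R^{R^{n-1}}$, the $n$-th term exists, and $b_j := c_{i_j} - X$ is $R$-growing by construction; the embedding is via $f_1$. In the \emph{clustered case} $D < R^{R^{n-1}}$, I would absorb the scale into $Y$ and assume $\seq c \subseteq [0,1]$, then aim for an $f_2$-embedding. Here I would iteratively build nested intervals $I_1 \supseteq I_2 \supseteq \cdots \supseteq I_{n+1}$ whose lengths shrink at a tower rate $|I_{k+1}| \approx |I_k|^R$, at each step partitioning $I_k$ into sub-intervals of the appropriate length and letting $I_{k+1}$ be the rightmost sub-interval containing near-maximally many points of $\seq c$. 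A geometric-series count shows that the number of points in $I_k$ stays above $2$ for $k$ up to $\approx \log_R \log_2 M = 2n$ levels---comfortably more than the $n+1$ layers needed. I would then pick $p_k \in I_k \setminus I_{k+1}$ for $k = 1, \ldots, n$; because $\seq c$ is increasing, the $p_k$ have consistent index ordering. Taking $X$ to be the common right endpoint of all $I_k$ and setting $b_j := 1/(X - p_j)$ produces an $R$-growing sequence that $f_2$-embeds with $Y = -1$.

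The main obstacle lies in the clustered case, where the inclusion $p_k \in I_k \setminus I_{k+1}$ only pins down $X - p_k$ to the interval $(|I_{k+1}|, |I_k|]$---a range too wide to directly imply $b_{k+1} \geq b_k^R$. Overcoming this requires building in a margin: replacing $|I_{k+1}| = |I_k|^R$ by, say, $|I_{k+1}| = |I_k|^{R+1}$ or $|I_k|^R/R$ creates a guaranteed gap between consecutive scales and forces the tower inequality at each step. This adjustment is absorbed into the exponents (it worsens the counting by a constant factor in $R$) and accounts for the hypothesis $R \geq R_0$. A secondary technical point is to ensure that $I_k \setminus I_{k+1}$ is nonempty, which I would enforce by choosing $I_{k+1}$ to exclude at least one point of $I_k$ whenever multiple candidate sub-intervals tie at the pigeonhole maximum; this is feasible as long as $I_k$ contains at least two points, which the counting guarantees. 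The overall bound $N = 2^{R^{2n}}$ then emerges from combining the square-root loss of the Erd\H{o}s--Szekeres step with the $\log_R \log_2$ depth of the tower-nested construction.
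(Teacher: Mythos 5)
Your overall strategy---monotone subsequence first, then a split between an ``additive'' branch handled by $f_1$ and a ``reciprocal'' branch handled by $f_2$---is in the right spirit, but both branches have genuine gaps, with a common cause: a single global dichotomy on $D=c_M-c_1$ cannot detect clustering that occurs at intermediate scales. For the tower-range branch, let $\seq c$ consist of the $M-1$ points $\eps,2\eps,\dots,(M-1)\eps$ with $\eps$ so small that the cluster has diameter below $1$, followed by one outlier $c_M=c_1+R^{R^{n-1}}$. Then $D\ge R^{R^{n-1}}$ puts you in the tower-range case, but your greedy gets $b_1=R$, is forced to jump to $i_2=M$ with $b_2=R^{R^{n-1}}+R$, and then $b_2^{\,R}$ exceeds the largest available value, so the construction stops at two terms. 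The required embedding does exist---inside the cluster, via $f_1$ with $Y=\eps$---but your case analysis never looks there, because the range is dominated by a single outlier. For the clustered branch, the nested intervals produced by pigeonhole are not right-aligned, so ``the common right endpoint of all $I_k$'' does not exist; and if you instead anchor $X$ in the innermost interval, a point $p_k\in I_k\setminus I_{k+1}$ can lie arbitrarily close to $X$, or on the wrong side of it, so neither the lower bound $X-p_k>|I_{k+1}|$ nor even the sign of $X-p_k$ is controlled. Even granting alignment, your proposed margin does not repair the slack you correctly identified: the worst case is $X-p_k$ just above $|I_{k+1}|$ and $X-p_{k+1}$ equal to $|I_{k+1}|$, and then $b_{k+1}\ge b_k^R$ requires $|I_{k+1}|\le |I_{k+1}|^R$, which fails for subunit lengths no matter how the ratio $|I_{k+1}|/|I_k|$ is chosen; the cure is to skip levels (confine $p_{k+1}$ two scales down), not to widen the gap between consecutive scales.

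The paper sidesteps both problems with one combinatorial lemma applied twice: every sufficiently long increasing sequence has a subsequence $\seq b$ such that $\seq b$ or $\rev(-\seq b)$ satisfies the doubling differences condition $|b_k-b_i|\ge 2|b_j-b_i|$ for all $i<j<k$, proved by a Ramsey-style midpoint recursion $N(k,\ell)\le N(k-1,\ell)+N(k,\ell-1)-1$. That recursion is precisely the multi-scale selection your single dichotomy lacks, and it automatically yields a consistent anchor (the first term) from which differences grow geometrically. Boosting to $R$-fold expansion, subtracting the anchor (this supplies the additive parameter $X$), and applying the same lemma again to the logarithms of the resulting positive terms (whose ``reversal'' branch is where the reciprocal, hence $f_2$, enters) gives the tower growth. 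Any repair of your argument that makes the dichotomy recursive and the anchor consistent will, in effect, reprove that lemma.
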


The proof is presented in Section~\ref{s:rams-pf} below.
A similar result, without the quantitative bound and stated
in a different language, is implicit in Rosenthal~\cite{Rosenthal-thesis}.

\section{Deciding predicates on transformed \boldmath$R$-growing sequences}
\label{s:pfT2}

Next, we would like to generalize Observation~\ref{o:lex-sign}
from an $R$-growing sequence to a sequence $\seq c$ obtained from
an $R$-growing sequence $\seq b$ (or its reversal) by a 
$2$-parametric transformation. 

Thus, let $\seq c=f(\seq b,A,B)$, for some
$2$-parametric transformation $f(x,X,Y)$ and some $A,B\in\R$.
For a given polynomial $p(x_1,\ldots,x_k)$,
we would like to understand the sign of
$p(c_{i_1},\ldots,c_{i_k})$.

Let us substitute $x_i=f(y_i,X,Y)$ into $p(x_1,\ldots,x_k)$,
and write the resulting rational function as the quotient
of two polynomials (in $y_1,\ldots,y_k,X,Y$).
We consider the sign of each of them separately.

Thus, let $q(y_1,\ldots,y_k,X,Y)$ be one of these two polynomials;
we write it as a polynomial in $y_1,\ldots,y_k$ whose coefficients
are polynomials in $X,Y$:
$$
q(y_1,\ldots,y_k,X,Y)=\sum_{\alpha\in\Lambda}
 q_\alpha(X,Y) y_1^{\alpha_1}\cdots y_k^{\alpha_k},
$$
where $\Lambda=\Lambda(q)$ is the set of all multiindices
$\alpha=(\alpha_1,\ldots,\alpha_k)$ such that the coefficient
$q_\alpha(X,Y)$ of $y_1^{\alpha_1}\cdots y_k^{\alpha_k}$
is not identically zero.

Unlike in Observation~\ref{o:lex-sign}, here we cannot in general
determine the sign of $q(b_1,\ldots,b_k,A,B)$ just from the knowledge
of the polynomial $q$ and from the fact that $\seq b$ is $R$-growing,
since we do not know the signs and order of magnitude of
the coefficients $q_\alpha(A,B)$.

Let us define, for $\alpha,\beta\in\Lambda$,
$$
\rho_{\alpha\beta}:= \frac{q_\alpha(A,B)}{q_\beta(A,B)}
$$
(where, for simpler notation, we put $\rho_{\alpha\beta}:=\infty$
if $q_\beta(A,B)=0$).
We observe that if each $\rho_{\alpha\beta}$  is either 
considerably smaller than $b_1$ or much larger than 
$b_k$, then the sign of $q(b_1,\ldots,b_k,A,B)$ can again
be determined. The following definition captures this condition.

\begin{definition}\label{d:type}
 Let $q=q(y_1,\ldots,y_k,X,Y)$ be a polynomial
and let $A,B\in \R$. We say that an $R$-growing sequence
$\seq b=(b_1,\ldots,b_n)$ is \emph{$R$-well-placed} w.r.t.\ $q,A,B$
if, for every $\alpha,\beta\in\Lambda(q)$, either
\begin{itemize}
\item $\rho_{\alpha\beta}$ is \emph{dwarfed} by $\seq b$,
meaning that $|\rho_{\alpha\beta}|\le b_1/R$, or
\item $\rho_{\alpha\beta}$ is \emph{gigantic} for $\seq b$,
meaning that $|\rho_{\alpha\beta}|\ge b_n^R$.
\end{itemize}
In this situation we define the \emph{type} of $q$
w.r.t. $A,B$, and $\seq b$ as the pair
$(\sigma,\tau)$, where $\sigma\:\Lambda\to \{-1,0,+1\}$
is given by $\sigma(\alpha):=\sgn q_\alpha(A,B)$, and
$\tau\:\Lambda^2\to \{{\rm D},{\rm G}\}$ is given by
$$
\tau(\alpha,\beta):=\alterdef{{\rm D}&\mbox{if 
$\rho_{\alpha\beta}$ is dwarfed by $\seq b$,}\\
{\rm G}&\mbox{if 
$\rho_{\alpha\beta}$ is gigantic for $\seq b$.}}
$$
\end{definition}

We also want to extend these notions from a single
polynomial $q$ to a collection of polynomials coming from
a predicate (or set of predicates).

Let $\bPhi$ be a (finite) set of semialgebraic predicates and
let $f=f(x,X,Y)$ be a $2$-parametric transformation.
For every polynomial $p(x_1,\ldots,x_k)$
occurring in some predicate of $\bPhi$ we proceed as above, i.e.,
we perform the substitution $x_i=f(y_i,X,Y)$, and we
express the resulting rational function as a quotient of
two polynomials in $y_1,\ldots,y_k,X,Y$.
Let $Q=Q(\bPhi,f)$ stand for the collection of all the polynomials
obtained in this way.

Then we will talk about an $R$-growing sequence $\seq b$
being \emph{$R$-well-placed} w.r.t.\ $Q,A,B$ (meaning that it is
$R$-well-placed w.r.t.\ $q,A,B$ for every $q\in Q$), and about the
\emph{type} of $Q$ w.r.t.\ $A,B$, and $\seq b$ (this is the
$|Q|$-tuple $\bigl((\sigma_q,\tau_q):q\in Q\bigr)$, where
$(\sigma_q,\tau_q)$ is the type of $q$ w.r.t.\ $A,B$, and $\seq b$).

Next, we extend our Ramsey-type result (Proposition~\ref{p:rams})
so that it yields well-placed sequences.

\begin{corollary}\label{c:rams}
Let $\FF_0$ be as in Proposition~\ref{p:rams}, and let $\bPhi$
be a set of semialgebraic predicates. Then there
is a constant $C_1=C_1(\bPhi)$ such that for every $n,R$ there 
exists 
\[N\le 2^{R^{ C_1n}}\]
such that for every sequence $\seq a$ of length $N$ there is
an $R$-growing sequence $\seq b$ of length $n$
such that $f(\seq b,A,B)$ or $f(\rev(\seq b),A,B)$
is a subsequence of $\seq a$ for some $f\in\FF_0$ and $A,B\in\R$,
and moreover, $\seq b$ is $R$-well-placed w.r.t.\ $Q(\bPhi,f),A,B$.
\end{corollary}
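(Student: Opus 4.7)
My plan is to apply Proposition~\ref{p:rams} with an enlarged parameter $n_0 = C'n$ (where $C' = C'(\bPhi)$ is to be chosen), and then thin the resulting $R$-growing sequence down to a length-$n$ subsequence which is additionally well-placed. Let $T = T(\bPhi)$ be a uniform upper bound on the number of pairs $(\alpha,\beta) \in \Lambda(q)^2$ summed over $q \in Q(\bPhi,f)$ and $f \in \FF_0$; since $\FF_0$ and $\bPhi$ are fixed, $T$ depends only on $\bPhi$. Invoking Proposition~\ref{p:rams} with parameter $n_0$, at cost $N \le 2^{R^{2n_0}}$, yields an $R$-growing sequence $\seq c = (c_1,\ldots,c_{n_0})$ together with $f \in \FF_0$ and $A,B \in \R$ such that $f(\seq c,A,B)$ or $f(\rev(\seq c),A,B)$ is a subsequence of $\seq a$. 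I fix $f,A,B$ henceforth, so each of the $\le T$ ratios $\rho_{\alpha\beta} = q_\alpha(A,B)/q_\beta(A,B) \in \R\cup\{\infty\}$ becomes a fixed quantity.

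Next I thin $\seq c$. Call an index $j$ \emph{medium} for a finite nonzero ratio $\rho$ if $|\rho|^{1/R} < c_j < R|\rho|$. The fast-growth condition $c_{j+1}\ge c_j^R$ together with $c_1 \ge R$ forces each $\rho$ to have only $O(1)$ medium indices: indeed, if $c_j$ is medium then $c_{j+2} \ge c_j^{R^2} > |\rho|^R$, which exceeds $R|\rho|$ outside a bounded range of $|\rho|$, and the residual small-ratio and $\{0,\infty\}$ boundary cases are handled directly once $R$ is above some absolute constant $R_0$. Deleting all medium indices from $\seq c$ leaves an $R$-growing subsequence $\seq c'$ of length at least $n_0 - O(T)$ with the property that each of its elements is, for every $\rho$, either at most $|\rho|^{1/R}$ or at least $R|\rho|$. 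Consequently each $\rho$ has a unique transition index $j^\star(\rho)$ in $\seq c'$, and the at most $T$ transition indices split $\{1,\ldots,|\seq c'|\}$ into at most $T+1$ blocks of consecutive indices.

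A pigeonhole argument then finishes the proof. Provided $C'$ is chosen large enough in terms of $T$, one of these blocks has length at least $n$. Taking $\seq b$ to be $n$ consecutive elements of $\seq c'$ inside such a block ensures that every $\rho$ is either uniformly below $\seq b$ (and so dwarfed by it) or uniformly above $\seq b$ (and so gigantic for it); that is, $\seq b$ is $R$-well-placed w.r.t.\ $Q(\bPhi,f), A, B$. Since $\seq b$ is a subsequence of $\seq c$, the embedding of $f(\seq b,A,B)$ (or $f(\rev(\seq b),A,B)$) into $\seq a$ is inherited from $\seq c$. The quantitative bound becomes $N \le 2^{R^{2C'n}} = 2^{R^{C_1n}}$ with $C_1 = 2C' = C_1(\bPhi)$. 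The only delicate point is the medium-range count in the second paragraph, which is exactly where the $R$-growing hypothesis is used in an essential way; the remainder is routine pigeonhole.
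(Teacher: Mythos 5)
Your proof is correct and follows essentially the same route as the paper's: invoke Proposition~\ref{p:rams} with the length inflated by a constant factor depending only on $\bPhi$, then pigeonhole on the positions of the boundedly many ratios $\rho_{\alpha\beta}$ relative to the $R$-growing terms, using the fast growth to convert ``all terms on the same side of $|\rho|$'' into ``dwarfed or gigantic.'' The only (cosmetic) difference is that the paper partitions the real axis by the values $|\rho_{\alpha\beta}|$ and trims the endpoints of a contiguous run in one cell, whereas you first delete the $O(1)$ ``medium'' indices per ratio and then pigeonhole on blocks between transition indices; both are sound.
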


\begin{proof} Let us take $n':=C(n+2)$, where $C$ is a sufficiently
large number depending on $\Phi$ and $\FF_0$, and use Proposition~\ref{p:rams}
with $n'$ instead of $n$. Given a sufficiently long sequence  $\seq a$,
we find an $R$-growing sequence $\seq b$ of length $n'$
as in the conclusion of Proposition~\ref{p:rams}, such that
$f(\seq b,A,B)$ or $f(\rev(\seq b),A,B)$ is a subsequence
of~$\seq a$.

Now $\seq b$ is not necessarily $R$-well-placed w.r.t.\ 
$Q(\bPhi,f)$, $A,B$, and
so we consider all the values $|\rho_{\alpha\beta}|$ as in
Definition~\ref{d:type}, generated from all the polynomials in
$Q(\bPhi,f)$. These divide the real axis into intervals,
whose number can be assumed to be at most $C$.
Among these intervals, we fix one containing at least
$n'/C=n+2$ terms of $\seq b$. We take the (contiguous) subsequence of $\seq b$
contained in this interval, and we delete the first and last terms.
Since $\seq b$ is $R$-growing, it follows that
the remaining $n$-term subsequence is $R$-well-placed w.r.t.\ $Q(\bPhi,f),A,B$.
\end{proof}

Here is an analog of Observation~\ref{o:lex-sign}.

\begin{lemma}\label{l:type-dec}
Let $\Phi$ be a predicate, let $\seq b$ be an $R$-growing
sequence, and let $\seq c=f(\seq b,A,B)$ for some
$2$-parametric transformation $f$. Suppose that
$\seq b$ is $R$-well-placed w.r.t.\ $Q(\Phi,f),A,B$,
and that $R$ is sufficiently large in terms of $\Phi$ and~$f$.
Then $\Phi$ holds either everywhere on $\seq c$
or nowhere on it, and these possibilities can
be distinguished based on $\Phi$, $f$, and the type
of $Q(\Phi,f)$ w.r.t.\ $A,B,\seq b$ (without
knowing $A,B$ and~$\seq b$). A similar statement holds
for the validity of $\Phi$ on $\rev(\seq c)$.
\end{lemma}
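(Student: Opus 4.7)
The plan is to show that for every polynomial $q\in Q(\Phi,f)$, the sign $\sgn q(b_{i_1},\ldots,b_{i_k},A,B)$ is determined by the type of $q$ and is independent of the choice of $i_1<\cdots<i_k$. Since each polynomial appearing in $\Phi$, after the substitution $x_j=f(y_j,X,Y)$, becomes a quotient of two elements of $Q(\Phi,f)$, this will immediately imply that each polynomial (in)equality in $\Phi$ has a fixed truth value on all $k$-tuples of $\seq c$, and hence so does the Boolean combination $\Phi$ itself. The case of $\rev(\seq c)$ is analogous, applied to the polynomial obtained by reversing the order of the first $k$ variables of $q$.

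Fix $q\in Q(\Phi,f)$, write $q=\sum_{\alpha\in\Lambda} q_\alpha(X,Y)\, y_1^{\alpha_1}\cdots y_k^{\alpha_k}$, and set $T_\alpha:=q_\alpha(A,B)\, b_{i_1}^{\alpha_1}\cdots b_{i_k}^{\alpha_k}$. I will identify a single ``dominant'' monomial $\alpha^*$ (among those with $\sigma(\alpha)\ne 0$) for which $\abs{T_{\alpha^*}}$ exceeds the sum of all other $\abs{T_\alpha}$, and conclude $\sgn q(b_{i_1},\ldots,b_{i_k},A,B)=\sigma(\alpha^*)$. The key will be a pairwise comparison $\abs{T_\alpha}/\abs{T_\beta}=\abs{\rho_{\alpha\beta}}\cdot b_{i_1}^{\alpha_1-\beta_1}\cdots b_{i_k}^{\alpha_k-\beta_k}$. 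Since both $(\alpha,\beta)$ and $(\beta,\alpha)$ are well-placed, $\abs{\rho_{\alpha\beta}}$ must lie in one of three disjoint regimes: \emph{very small} ($\le b_n^{-R}$), \emph{medium} (in $[R/b_1,b_1/R]$), or \emph{very large} ($\ge b_n^R$), and the regime is read off from $(\tau(\alpha,\beta),\tau(\beta,\alpha))$.

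I would then settle the comparison regime by regime. In the very small and very large regimes, $\abs{\log\abs{\rho_{\alpha\beta}}}\ge R\log b_n$ dominates the polynomial part (bounded in magnitude by $Dk\log b_n$, with $D$ the maximum exponent), so for $R$ exceeding $Dk$ the sign is determined by $\log\abs{\rho_{\alpha\beta}}$ alone. In the medium regime, $\abs{\log\abs{\rho_{\alpha\beta}}}\le\log b_1-\log R$, whereas the polynomial part has sign $\sgn(\alpha_{j^*}-\beta_{j^*})$ (with $j^*=\max\{j:\alpha_j\ne\beta_j\}$) and absolute value at least $\log b_{i_{j^*}}\ge\log b_1$ by the $R$-growing condition; the gap of $\log R$ forces the polynomial part to win, and the comparison reduces to the reverse-lexicographic order on exponent vectors. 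Crucially, this preorder on $\Lambda$ depends only on the type and on the exponent vectors, not on the choice of $i_1<\cdots<i_k$, so $\alpha^*$ is the same for every $k$-tuple.

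Finally, the same case analysis gives $\abs{T_{\alpha^*}}/\abs{T_\alpha}\ge R$ for every $\alpha\ne\alpha^*$, so for $R$ large enough relative to $\abs{\Lambda}$ the dominant monomial overwhelms the sum of all competitors, fixing $\sgn q(b_{i_1},\ldots,b_{i_k},A,B)=\sigma(\alpha^*)$. The main subtlety is calibrating $R$ simultaneously so that (i) the dominance gaps in each regime exceed $\abs{\Lambda}$, and (ii) the medium-regime comparison remains decisive even when $j^*=1$, where the polynomial growth is only $\log b_{i_1}\ge\log b_1\ge\log R$; both requirements reduce to taking $R$ large enough in terms of the degrees and the number of monomials in $Q(\Phi,f)$, which in turn depends only on $\Phi$ and $f$.
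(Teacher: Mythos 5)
Your proposal is correct and follows essentially the same approach as the paper's (sketched) proof: reduce to showing that each $q\in Q(\Phi,f)$ has constant sign over all increasing index tuples, read the signs of the terms $q_\alpha(A,B)b_{i_1}^{\alpha_1}\cdots b_{i_k}^{\alpha_k}$ off the type, and compare terms pairwise via the trichotomy ($\rho_{\alpha\beta}$ gigantic, $\rho_{\beta\alpha}$ gigantic, or both dwarfed, the last case reducing to the reverse-lexicographic comparison of Observation~\ref{o:lex-sign}). Your write-up merely supplies the quantitative calibration of $R$ that the paper leaves implicit.
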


\begin{proof}[Sketch of proof.]
It suffices to check that if $q=q(y_1,\ldots,y_k,X,Y)\in Q(\Phi,f)$ 
is a single polynomial, then $\sgn q(b_{i_1},\ldots,b_{i_k},A,B)$ is
the same for all choices of $i_1<\cdots<i_k$ (and it can be
deduced from the type of $q$).

The type of $q$ w.r.t.\ $A,B,\seq b$ gives us the signs
 of the terms $q_\alpha(A,B)b_{i_1}^{\alpha_1}\cdots b_{i_k}^{\alpha_k}$, $\alpha\in\Lambda(q)$.
We just need to check that whenever $\alpha,\beta\in\Lambda(q)$,
$\alpha\ne\beta$, the absolute values of the terms
$q_\alpha(A,B)b_{i_1}^{\alpha_1}\cdots b_{i_k}^{\alpha_k}$ and
$q_\beta(A,B)b_{i_1}^{\beta_1}\cdots b_{i_k}^{\beta_k}$ have
different orders of magnitude. If one of $q_\alpha(A,B)$
and $q_\beta(A,B)$ is $0$, we are done. If they are both nonzero
and $\rho_{\alpha\beta}$ is gigantic, the first term wins;
if $\rho_{\beta\alpha}$ is gigantic, the second term wins;
and if both $\rho_{\alpha\beta}$ and $\rho_{\beta\alpha}$ are dwarfed,
then the comparison is lexicographic according to $\alpha$ and $\beta$,
as in Observation~\ref{o:lex-sign}.
\end{proof}

\begin{proof}[Proof of Proposition~\ref{p:homog}.]
This proposition is an immediate consequence of Corollary~\ref{c:rams}
and Lemma~\ref{l:type-dec}.
\end{proof}

\begin{proof}[Proof of Theorem~\ref{t:rams-ub}.]
Let $k$ be the maximum arity of a predicate in $\bPhi$,
and let $n_0:=\ES_\bPhi(k)$ (this is well defined since
$\bPhi$ is Erd\H{o}s--Szekeres).

Let us consider $n\ge n_0$, let $N:=2^{R^{C_1n}}$ be as in
Corollary~\ref{c:rams}, and let $\seq a$ be
an arbitrary sequence of length $N$. Corollary~\ref{c:rams} yields
an $R$-growing sequence $\seq b$ of length $n$ 
such that, for some $f\in\FF_0$ and $A,B\in\R$,
$\seq c:=f(\seq b,A,B)$ or $\seq c:=f(\rev(\seq b),A,B)$ is a subsequence
of $\seq a$. Moreover, $\seq b$ is $R$-well-placed w.r.t.\ $Q(\bPhi,f),A,B$.
Then by Lemma~\ref{l:type-dec}, each $\Phi\in\bPhi$ holds either
everywhere on $\seq c$ or nowhere on $\seq c$.

Since $n\ge n_0=\ES_\bPhi(k)$, the sequence $\seq c$ contains at least
one $k$-tuple on which some $\Phi\in\bPhi$ holds, and hence
this $\Phi$ holds everywhere on $\seq c$. This shows that
$\ES_\bPhi(n)\le 2^{R^{C_1n}}$ for all $n\ge n_0$.
Since $R$ depends only on $\bPhi$, this gives the 
desired bound $\ES_\bPhi(n)\le 2^{2^{O(n)}}$
for all $n\ge n_0$, and the finitely many $n<n_0$
can be taken care of by setting $C$ sufficiently large.
\end{proof}

The way we obtained the existence of $n_0$ in the proof above
was nonconstructive. Alternatively, one can obtain an explicit
dependence of $n_0$ on $\bPhi$ using the idea in Section~\ref{s:feas}.

\section{Proof of the Ramsey-type result for sequences}
\label{s:rams-pf}

We begin with a quantitative version of the ``dichotomy lemma''
of Rosenthal~\cite{Rosenthal-thesis}.
Let us say that a sequence $\seq b=(b_1,\ldots,b_n)$
satisfies the \emph{doubling differences condition},
or the DDC for short,
if $|b_k-b_i|\ge 2|b_j-b_i|$ holds for every~$i<j<k$
(or, in our previous terminology, the predicate ``$|x_3-x_1|\ge 2|x_2-x_1|$''
holds everywhere on $\seq b$).
We note that the differences in a sequence $\seq b$ satisfying
the DDC grow (at least) exponentially,
e.g., $|b_i-b_1|\ge 2^{i-1}|b_2-b_1|$. 

\begin{lemma}\label{l:doubl}
 Let $n$ be a natural number, and let $N={2n\choose n}\le 4^n$.
Then every (strictly) increasing sequence $\seq a$ 
of real numbers of length $N$
has a subsequence $\seq b$ of length $n$ such that
one of the two (increasing) sequences $\seq b$
and $\rev(-\seq b)$ (where $-\seq b$ stands for $(-b_1,\ldots,-b_n)$)
 satisfies  the DDC.
\end{lemma}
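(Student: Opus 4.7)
The plan is to recast the DDC as a clean property of triples and then attack the lemma via an Erd\H{o}s--Szekeres--style labeling argument.

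First, for an increasing sequence $\seq b = (b_1, \ldots, b_n)$, the condition $|b_k - b_i| \ge 2|b_j - b_i|$ for $i<j<k$ rearranges to $b_j \le (b_i + b_k)/2$ --- the middle term of every triple lies in the lower half of the interval it spans. A short calculation with $c_s := -b_{n+1-s}$ translates the DDC on $\rev(-\seq b)$ into the symmetric condition $b_j \ge (b_i + b_k)/2$ on $\seq b$ itself. Call sequences satisfying the first condition \emph{L-chains} and the second \emph{R-chains}; the lemma then asserts that every increasing sequence of length $\binom{2n}{n}$ contains an L-chain or an R-chain of length $n$.

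Both chain properties are hereditary under subsequences, so I would attempt a classical Erd\H{o}s--Szekeres pigeonhole. For each index $i$ let $p(i)$ be the length of the longest L-chain of $(a_1,\ldots,a_i)$ ending at $a_i$, and $q(i)$ the analogous length for R-chains. If any $p(i)\ge n$ or $q(i)\ge n$, we are done, so suppose $p(i), q(i) \in \{1,\ldots,n-1\}$ for all $i$. A plain pigeonhole on the pair $(p(i),q(i))$ would give the bound $N\le(n-1)^2$, which is not only stronger than what is claimed but also false: the pair need not be injective, because extendability of a witnessing chain past $a_i$ depends not only on its length but also on its first element (the ``slack'' for future extension). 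I would therefore enrich the label by tracking, for each length $k$, the \emph{most extendable} witness --- the L-chain of length $k$ ending at $a_i$ whose first element is maximal, and the R-chain of length $k$ whose first element is minimal.

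The main obstacle is bounding the number of distinct enriched labels by $\binom{2n}{n}$. The plan is to exploit monotonicity: as $i$ ranges over $\{1,\ldots,N\}$, each of the $2(n-1)$ threshold coordinates evolves monotonely (non-decreasing on the L side, non-increasing on the R side), and the overall label-trajectory should encode each index by a sequence of monotone updates. Setting up the right injection from indices to such update sequences --- equivalently to lattice paths in an $(n-1)\times(n-1)$ grid, or to $n$-subsets of a $2n$-element ground set --- in order to obtain the $\binom{2n}{n}$ bound rather than a weaker exponential count, is where I expect the core combinatorial work to lie. In particular, one must carefully verify the monotonicity claims for the extremal starting points under the chain-extension operation, and ensure that the whole ``label-plus-history'' data uniquely identifies $i$.
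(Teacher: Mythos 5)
Your reformulation of the DDC is correct and matches the paper's opening observation: for an increasing sequence the DDC becomes $b_j \le (b_i+b_k)/2$, and for the reversed negation it becomes $b_j \ge (b_i+b_k)/2$; this is exactly the paper's remark that the DDC is equivalent to $b_k-b_j \ge b_j-b_i$. You also correctly identify why a plain Erd\H{o}s--Szekeres pigeonhole on $(p(i),q(i))$ would not be valid here: extendability of a chain past $a_i$ depends on its first term, not just its length. But the proof stops there, and the gap you flag as ``where I expect the core combinatorial work to lie'' is genuine and not straightforwardly repairable along the lines you sketch. In particular, the monotonicity you are counting on fails: the natural ``best threshold'' for extending a length-$p$ L-chain ending at $a_i$ (namely $\min\{2a_i-b_1\}$ over chains $(b_1,\ldots,a_i)$) is not a monotone function of $i$. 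For instance, with $\seq a=(0,1,100,101)$ and $p=2$, the threshold at $i=3$ is $2\cdot 100-1=199$ but at $i=4$ it is $2\cdot 101-100=102$. So the ``label-plus-history'' injection is unfounded as stated, and the $\binom{2n}{n}$ count does not follow.

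The paper's proof takes a different and much simpler route: it defines $N(k,\ell)$ as the threshold for finding a length-$k$ L-chain or a length-$\ell$ R-chain, and establishes the recurrence
\[
N(k,\ell)\le N(k-1,\ell)+N(k,\ell-1)-1
\]
by a geometric divide-and-conquer. Given an increasing $\seq a$ of length $N(k-1,\ell)+N(k,\ell-1)-1$, split the interval $[a_1,a_N]$ at its midpoint $\tfrac12(a_1+a_N)$; at least one side has enough points. If the left side has $\ge N(k-1,\ell)$ terms and (say) yields an L-chain $\seq b'$ of length $k-1$, then appending $a_N$ preserves the DDC (because every term of $\seq b'$ is at most the midpoint, hence at most $(b_i'+a_N)/2$ for all $i$), giving a length-$k$ L-chain; the other cases are symmetric. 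The recurrence with the trivial base cases then yields $N(k,\ell)\le\binom{k+\ell}{k}$, hence $\binom{2n}{n}$. This is the standard pattern for the graph Ramsey recurrence, not the pattern of the monotone-subsequence labeling argument, and it avoids entirely the need to track chain ``slack'' or prove any monotonicity of extremal first elements.
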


\begin{proof} 
Before proceeding, let us note that if we are not interested in
the quantitative bound, the existence of a suitable $N$ follows
easily from Ramsey's theorem for triples. Indeed, we observe
that, assuming $b_i< b_j< b_k$,  the DDC
 $b_k-b_i\ge 2(b_j-b_i)$ is equivalent to $b_k-b_j\ge b_j-b_i$.
Given an increasing sequence $(a_1,a_2,\ldots,a_N)$, we color
a triple $\{i,j,k\}\subseteq [N]$, $i<j<k$, red if 
$a_k-a_j\ge a_j-a_i$, and blue otherwise. Now a red homogeneous
subset corresponds to a subsequence $\seq b$ satisfying the DDC, 
 and a blue homogeneous subset corresponds to
a subsequence $\seq b$ with $\rev(\seq b)$ satisfying the DDC. 
The latter sequence
is decreasing rather than increasing, but we can repair this
by considering $\rev(-\seq b)$, since  negation preserves the~DDC. 

Now we present the proof giving a better quantitative bound;
it resembles one of the well-known proofs of the Ramsey theorem for 
graphs. Let us define $N(k,\ell)$ as the smallest $N$ such
that every increasing sequence $\seq a$ of length $N$
contains a subsequence $\seq b$ of length $k$ satisfying the DDC,
or a subsequence $\seq b$ of length $\ell$ with $\rev(\seq b)$
satisfying the DDC.

We have the initial conditions $N(2,\ell)=2$ and $N(k,2)=2$, and
below we will derive the recurrence
\[
N(k,\ell)\le N(k-1,\ell)+N(k,\ell-1)-1,\ \ k,\ell\ge 3.
\]
It is well known, and easy to check, that this implies
$N(k,\ell)\le {k+\ell\choose k}$, and so the bound in the lemma follows.

To verify the recurrence, let $N:=N(k-1,\ell)+N(k,\ell-1)-1$
and let $\seq a$ be a nondecreasing sequence of length $N$.
We divide the interval $[a_1,a_N]$ into the left and right
subinterval by the midpoint $\frac12(a_1+a_N)$, and we
let $\seq a'$ and $\seq a''$ be the parts of $\seq a$
in the left and right subinterval, respectively.
Then  $\len \seq a'\ge N(k-1,\ell)$ 
or $\len \seq a''\ge N(k,\ell-1)$ (where $\len\seq a$ denotes the
length of a sequence~$a$).

In the former case, if $\seq a'$ contains a subsequence
$\seq b$ of length $\ell$ with $\rev(\seq b)$ satisfying the
DDC, we are done. Otherwise, $\seq a'$ has 
a subsequence $\seq b'$ of length $k-1$ satisfying the
DDC, and it is easy to see that the sequence
obtained by appending $a_N$ to $\seq b'$ also satisfies the DDC.
The case of $\len \seq a''\ge N(k,\ell-1)$ is analogous.
This concludes the proof.
\end{proof}

Rather than using Lemma~\ref{l:doubl} directly, we will apply 
the following simple consequence, where the DDC is strengthened
to $R$-fold expansion of the differences.

\begin{corollary}\label{c:R-fold}
Let $n$ be a natural number, let $R>1$, and set
$r:=\lceil\log_2 R\rceil$. Then there is $N\le  4^{r(n-1)+1}$
such that every (strictly) increasing sequence $\seq a$
of real numbers of length $N$
has a subsequence $\seq b$ of length $n$ such that
the predicate ``$x_3-x_1\ge R(x_2-x_1)$'' holds everywhere on
one of the two (increasing) sequences $\seq b$
and $\rev(-\seq b)$.
\end{corollary}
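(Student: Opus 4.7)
The plan is to amplify the doubling in Lemma~\ref{l:doubl} into the desired $R$-fold expansion by applying that lemma to a longer intermediate sequence and then sub-sampling arithmetically. I would set $n' := r(n-1)+1$ and invoke Lemma~\ref{l:doubl} with $n'$ in place of $n$, producing from any increasing sequence $\seq a$ of length $N := \binom{2n'}{n'} \le 4^{n'} = 4^{r(n-1)+1}$ a subsequence $\seq b' = (b'_1,\ldots,b'_{n'})$ such that either $\seq b'$ or $\rev(-\seq b')$ satisfies the DDC. This already yields the claimed bound on $N$.

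From $\seq b'$ I would extract the arithmetic subsequence $\seq b = (b_1,\ldots,b_n)$ defined by $b_m := b'_{(m-1)r+1}$. The key step is the iterated-DDC estimate
\[
 b'_{k+s} - b'_i \;\ge\; 2^s\bigl(b'_k - b'_i\bigr),\qquad i<k,\ s\ge 0,
\]
which I would prove by induction on $s$: each step applies the DDC to the triple of indices $i < k+t < k+t+1$ to gain a factor of $2$. Substituting $i := (i'-1)r+1$, $k := (j'-1)r+1$, and $k+s := (k'-1)r+1$ for any $i'<j'<k'$ in $\{1,\ldots,n\}$ gives $s=(k'-j')r\ge r$, so
\[
 b_{k'} - b_{i'} \;\ge\; 2^r\bigl(b_{j'}-b_{i'}\bigr) \;\ge\; R\bigl(b_{j'}-b_{i'}\bigr)
\]
by the choice of $r$, which is exactly the advertised $R$-fold expansion on $\seq b$.

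The other alternative of the dichotomy---when $\rev(-\seq b')$ rather than $\seq b'$ itself satisfies the DDC---is handled by running the identical argument inside $\rev(-\seq b')$; a short reindexing check shows that $\rev(-\seq b)$ coincides with the arithmetic sub-sampling of $\rev(-\seq b')$ at positions $1, r+1,\ldots,(n-1)r+1$, so it inherits the $R$-fold expansion from the same iterated-DDC computation. I do not anticipate any serious obstacle: the whole point is that each unit-step gap between consecutive indices of $\seq b'$ contributes a single doubling via the DDC, and stacking $r$ such gaps---which is the effect of arithmetic sub-sampling with spacing $r \ge \log_2 R$---promotes that doubling to the required factor of $R$.
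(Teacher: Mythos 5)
Your proof is correct and is exactly the paper's argument: apply Lemma~\ref{l:doubl} with $n'=r(n-1)+1$ and take the arithmetic sub-sample $b_i:=b'_{r(i-1)+1}$, so that each block of $r$ consecutive DDC doublings compounds to a factor $2^r\ge R$. The paper states this in one line without the iterated-DDC verification or the reindexing check for the reversed case, both of which you supply correctly.
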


\begin{proof} Select a subsequence $\seq b'$ of $\seq a$ of length $r(n-1)+1$
as in Lemma~\ref{l:doubl}, and define $b_i:= b'_{r(i-1)+1}$,
$i=1,2,\ldots,n$.
\end{proof}

\begin{proof}[Proof of Proposition~\ref{p:rams}.]
Let $n$ and $R$ be given, let $N$ be sufficiently large
as in the proposition, and let $\seq a$ be a sequence of length~$N$.

First, if some number occurs at least $n$ times in $\seq a$,
we are done, since the $1$-parametric transformation $f(x,X)=X$
(a special case of both $f_1$ and $f_2$ with $Y=0$)
embeds any $n$-term sequence into $\seq a$. Otherwise,
we may pass to a subsequence $\seq a'$ with all terms
distinct and of length at least $N/n$. Next, by the
Erd\H{o}s--Szekeres lemma, we can further pass to
a subsequence $\seq a''$, with $\len (\seq a'')\ge
\sqrt{N/n}$, that is either increasing or decreasing.
We consider only the increasing case, the decreasing one
being symmetric.

Next, applying Corollary~\ref{c:R-fold}, we obtain a subsequence
$\seq a^{(3)}$ of $\seq a''$, where $\len \seq a''\le 4^{r(\len\seq a^{(3)}-1)+1}$, such that 
``$x_3-x_1\ge R(x_2-x_1)$'' holds everywhere on $\seq a^{(3)}$
or $\rev(-\seq a^{(3)})$.
Let us again discuss only the former case.

In this case we form a new sequence $\seq b^{(3)}$ of length 
$\len\seq a^{(3)}-1$ by setting $b^{(3)}_i:= a^{(3)}_{i+1}-a^{(3)}_1$. 
We note that $\seq b^{(3)}$ satisfies $b^{(3)}_1>0$ 
and $b^{(3)}_{i+1}\ge R b^{(3)}_{i}$,
and it embeds into $\seq a$ via the $1$-parametric transformation
$f(x,X)=x+X$ for $X:=a^{(3)}_1$.

Now we apply Corollary~\ref{c:R-fold} again, this time to the sequence
$\seq \ell$ with $\ell_i=\log b^{(3)}_i$, and this yields
a subsequence $\seq b^{(4)}$ of $\seq b^{(3)}$, such that
$$
\frac {b^{(4)}_k}{b^{(4)}_i} \ge
\left(\frac {b^{(4)}_j}{b^{(4)}_i}\right)^{\!\!R} 
$$
holds for every $i<j<k$, or a similar relation holds for 
$\rev(1/\seq b^{(4)})$.
As expected, we again deal explicitly only with the
first possibility, and we set $c^{(4)}_i := b^{(4)}_{i+1}/b^{(4)}_1$,
$i=1,2,\ldots,\len (\seq b^{(4)})-1$.

By this choice, we have $c^{(4)}_{i+1}\ge (c^{(4)}_{i})^R$.
Moreover, since $b^{(4)}_{i+1}\ge Rb^{(4)}_{i}$ for all $i$,
we also obtain $c^{(4)}_1=b^{(4)}_2/b^{(4)}_1\ge R$,
and so $\seq c^{(4)}$ is $R$-growing. It is also clear
that $\seq c^{(4)}$ embeds into $\seq b^{(3)}$ via the 1-parametric
transformation $g(x,Y):= x Y$ with $Y:= b^{(4)}_1$, and hence
it embeds into $\seq a$ via the $2$-parametric transformation
$h(x,X,Y)=f(g(x,X),Y)$. Thus, $\seq c^{(4)}$ is our desired
sequence (called $\seq b$ in the proposition).

By following the chain of length estimates backwards, we
get that 
\[N\ge n4^{2(r(4^{r(n-1)+2}-1)+2)} \] 
suffices for the whole argument. Assuming that $R$ exceeds
a suitable constant (not very large, actually), a series
of rough estimates shows that the right-hand side
can be bounded by $2^{R^{2n}}$ as claimed.
\end{proof}

\section{The decision algorithm}\label{s:algo}

Before formulating the algorithm for deciding whether
a given set of predicates $\bPhi$ is Erd\H{o}s--Szekeres, we
introduce some additional terminology.

If $q=q(y_1,\ldots,y_k,X,Y)$ is a polynomial,
a \emph{candidate type} for $q$ is a pair $(\sigma,\tau)$
of arbitrary functions $\sigma\:\Lambda\to\{-1,0,+1\}$
and $\tau\:\Lambda^2\to\{{\rm D},{\rm G}\}$, where
$\Lambda$ is as introduced above in Definition~\ref{d:type}.
A \emph{candidate type} for a collection $Q$
of polynomials (all in the variables $y_1,\ldots,y_k,X,Y$)
is an arbitrary sequence $T=\bigl((\sigma_q,\tau_q):q\in Q\bigr)$,
where each $(\sigma_q,\tau_q)$ is a candidate type for~$q$.

\begin{definition}
\label{d:feas}
We call a candidate type $T$ for a collection $Q$ of polynomials
\emph{feasible} if for every $R$ and every $n$ there exist 
$A,B\in\R$ and an $R$-growing sequence 
$\seq b$ of length $n$ that is $R$-well-placed w.r.t.\ $Q$, such that
the type of $Q$ w.r.t.\ $A,B$, and $\seq b$ equals~$T$.
Any such sequence $\seq b$ is called a \emph{feasible $R$-growing
sequence} for $Q$ and~$T$.
\end{definition}

We note that our definition of feasibility does not use any
particular value of $R$, but has the quantification ``for all $R$''.
Accordingly, we will never use an explicit value of $R$ in our
algorithm.

In Section~\ref{s:feas} below, we will provide a subroutine
that tests feasibility of a given candidate type
for a given collection~$Q$ of polynomials. 
Now we present the main algorithm, in which we use this feasibility
testing as a black box.

\paragraph{Algorithm for testing if $\bPhi$ is Erd\H{o}s--Szekeres. }\ \\
\noindent\emph{Input: } A finite set $\bPhi$ of semialgebraic predicates.\\ 
\noindent\emph{Output: } YES if $\bPhi$ is Erd\H{o}s--Szekeres, NO otherwise.
\begin{enumerate}
\item\label{s::} Let $\FF_0$ be the set of two $2$-parametric transformations
as in Proposition~\ref{p:rams}. For each $f\in\FF_0$ let $Q(\bPhi,f)$ be the 
collection of polynomials obtained from $\bPhi$ (as introduced after 
Definition~\ref{d:type}).
Perform the following steps
for every $f\in\FF_0$ and every candidate type $T$ for $Q(\bPhi,f)$. 
If all these steps are completed without
returning NO, return YES and finish.
\item\label{s:}  
Test the feasibility of $T$ (as specified in Section~\ref{s:feas} below).
If $T$ is feasible, continue with the next step; otherwise, continue
at Step~\ref{s::} with the next $T$ or next~$f$.
\item\label{s:::}
 Determine, by the method of Lemma~\ref{l:type-dec},
whether all $\neg\Phi$ for $\Phi\in\bPhi$, hold everywhere on $f(\seq b,A,B)$
or on $f(\rev(\seq b),A,B)$,
where $\seq b$ is a feasible $R$-growing sequence for $Q(\bPhi,f)$
and $T$ (here $A,B$ are the corresponding parameter
values, which we need not determine explicitly;
similarly, $R$ is only assumed to be sufficiently large).
If they do, return NO
and finish the whole algorithm. If not, continue at Step~\ref{s::}
with the next $T$ or next~$f$.
\end{enumerate}

\begin{proof}[Proof of Theorem~\ref{t:es1} (assuming the feasibility testing).] 
The algorithm is clearly finite, so it suffices to verify that
its answer is always correct. With the tools developed above,
the proof is routine.

First suppose that the algorithm
 returns NO; then this is because in Step~\ref{s:::} it has 
found arbitrarily long sequences $\seq c$ on which all $\neg\Phi$,
$\Phi\in\bPhi$, hold everywhere, namely, the sequences 
$f(\seq b,A,B)$ or $f(\rev(\seq b),A,B)$ with $\seq b$
a feasible $R$-growing sequence for $Q(\bPhi,f)$ and the considered 
type $T$, where $R$ is sufficiently large (depending on $\bPhi$),
$A$ and $B$ are suitable values of the parameters,
and $\seq b$ can be chosen as long as desired.
So $\bPhi$ is not Erd\H{o}s--Szekeres.
\medskip

Now suppose that $\bPhi$ is not Erd\H{o}s--Szekeres; this means that
for some $n_0$, there exist arbitrarily long sequences for which
no $\Phi\in\bPhi$ holds everywhere on any subsequence of length $n_0$.
By Ramsey's theorem, this means that there are arbitrarily long
sequences $\seq a$ on which each of $\neg\Phi$, $\Phi\in\bPhi$,
holds everywhere. (Alternatively, we can avoid
using Ramsey's theorem and argue as in the proof of Theorem~\ref{t:rams-ub}
at the end of Section~\ref{s:pfT2}.)
For every $N$, fix such a sequence $\seq a^{(N)}$ of length~$N$.

For every $n$, by Corollary~\ref{c:rams},
there is $N=N(n)$ such that for some $n$-growing $n$-term
sequence $\seq b^{(n)}$, some $f^{(n)}\in\FF_0$, and
some $A^{(n)},B^{(n)}\in\R$, one of the sequences $f(\seq b^{(n)},
A^{(n)},B^{(n)})$ and $f(\rev(\seq b^{(n)}),
A^{(n)},B^{(n)})$ is a subsequence of $\seq a^{(N(n))}$,
and $\seq b^{(n)}$ is $n$-well-placed 
w.r.t.\ $Q(\Phi,f^{(n)})$, $A^{(n)},B^{(n)}$.
Let $T^{(n)}$ be the type of $Q(\bPhi,f^{(n)})$ w.r.t.\ $A^{(n)},B^{(n)}$,
and $\seq b^{(n)}$.

Since $\FF_0$ is finite and there are finitely many types,
there is an infinite subsequence $n_1<n_2<\cdots$ such that
all the $f^{(n_j)}$ are equal to the same $f$ and all the 
$T^{(n_j)}$ are equal to the same~$T$. But then we get
that $T$, regarded as a candidate type for $Q(\bPhi,f)$,
is feasible, and Step~\ref{s:::} returns NO for this $f$
and~$T$. This concludes the proof.
\end{proof}

\section{Testing the feasibility of a candidate type}\label{s:feas}

The problem of testing feasibility of a given candidate type for a 
collection of polynomials can be recast in the following (slightly more
general) terms. We are given a finite collection $\QQ$ of 
bivariate polynomials (the $q_{\alpha}(X,Y)$ in the setting of 
Definition~\ref{d:feas}), and for each $q\in\QQ$, a sign
$\sigma_q\in\{-1,0,+1\}$ is specified. Moreover, we are given two
finite collections $\DD$ and $\GG$ of rational functions in the variables
$X,Y$ (in the setting of Definition~\ref{d:feas}, $\DD$ consists of
the $\rho_{\alpha\beta}$ that should be dwarfed by the desired 
feasible sequence, while $\GG$ are the $\rho_{\alpha\beta}$ that 
should be gigantic for it). The numerators and
denominators of the rational functions in $\DD$ and in $\GG$ belong
to $\QQ$, and thus their signs are under control. Moreover, we may assume
that the prescribed signs for all the denominators are nonzero.

The question then is whether we can make a large enough gap between
the values of the functions in $\DD$ and those in $\GG$ so that
an $n$-term $R$-growing sequence fits there. Formally, we thus ask
for the validity of the formula
$$
\Psi := \forall n\in\N\,\, \forall R\, \exists L,H
:(L\ge R)\wedge (H\ge L^{R^{n+2}})\wedge
\Xi(L,H),
$$
where
\begin{eqnarray*}
\Xi(L,H) &:=& \exists X,Y: \Bigl(\bigwedge_{q\in \QQ}\sgn q(X,Y)=\sigma_q\Bigr)
\\&&\ \ \ 
              \wedge \Bigl(\bigwedge_{\rho\in\DD} \rho(X,Y)\le L\Bigr)
              \wedge \Bigl(\bigwedge_{\rho\in\GG} \rho(X,Y)\ge H\Bigr). 
\end{eqnarray*}

We would like to test the validity of $\Psi$
using a decision algorithm for the first-order
theory of real-closed fields (the first such algorithm
is due to Tarski \cite{t-dmeag-51}, and we refer, e.g., to
\cite{BasuPollackRoy-book} for more recent ones). While $\Xi(L,H)$ can easily
be rewritten to a first-order formula in the theory of the reals
(since $\sgn q(X,Y)=\sigma_q$ is expressed as a polynomial inequality,
and the inequalities $\rho(X,Y)\le L$ or $\rho(X,Y)\ge H$
can be multiplied by the denominator, since we know its sign),
$\Psi$ is not such a formula: quantification over the
natural numbers, and more importantly, the exponential function,
do not belong to the first-order theory of the reals.

To remedy this, we replace $\Psi$ by the formula
$$
\Psi^*:= \forall R\,\,\exists L:
(L\ge R)\wedge \forall H\,\,\Xi(L,H).
$$
The validity of $\Psi^*$ can be tested by the decision algorithms
mentioned above, and so for completing our subroutine for
feasibility testing, it suffices to prove the following.

\begin{lemma}\label{l:equiv}
The formulas $\Psi$ and $\Psi^*$ are equivalent.
\end{lemma}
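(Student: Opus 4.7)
The plan is to prove $\Psi^* \Rightarrow \Psi$ trivially and $\Psi \Rightarrow \Psi^*$ by contrapositive, using the semialgebraicity of the region $\{(L,H) : \Xi(L,H)\}$ to convert the super-exponential pattern in $\Psi$ into the purely first-order pattern in $\Psi^*$.

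For the easy direction $\Psi^* \Rightarrow \Psi$: given $n$ and $R$, apply $\Psi^*$ with the same $R$ to obtain $L \geq R$ such that $\Xi(L,H)$ holds for every $H$; then $H := L^{R^{n+2}}$ witnesses $\Psi$.

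For $\Psi \Rightarrow \Psi^*$, introduce the auxiliary function
\[ g(L) := \sup\{H : \Xi(L,H)\} \in \R \cup \{+\infty\}. \]
Since $\Xi$ is monotone in $L$ and anti-monotone in $H$, each slice $\{H : \Xi(L,H)\}$ is a downward-closed subset of $\R$, and therefore $\forall H\, \Xi(L,H) \Leftrightarrow g(L) = +\infty$. Moreover, where $g$ is finite its graph is semialgebraic, since $\Xi$ is semialgebraic. Arguing by contrapositive, assume $\Psi^*$ fails; then there exists $R^*$ with $g(L) < +\infty$ for every $L \geq R^*$, so $g$ restricts to a semialgebraic real-valued function on $[R^*, \infty)$. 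By the standard polynomial growth bound for one-variable semialgebraic functions (obtained via Puiseux expansion at infinity), there exist $C > 0$ and $r \in \N$ with $g(L) \leq C(1+L)^r$ for all $L \geq R^*$.

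To finish the contrapositive, pick $R \geq \max(R^*, 2)$ and $n$ large enough that $R^{n+2} > r$ and $R^{R^{n+2} - r} > 2^r C$; both are achievable because $R \geq 2$ makes $R^{R^{n+2}-r}$ tower-growing in $n$. For every $L \geq R$ this yields
\[ L^{R^{n+2}} = L^r \cdot L^{R^{n+2}-r} \geq L^r \cdot R^{R^{n+2}-r} > 2^r C L^r \geq C(1+L)^r \geq g(L), \]
so $\Xi(L,H)$ fails for every $H \geq L^{R^{n+2}}$, contradicting $\Psi$ at these values of $n, R$. The main obstacle is precisely the polynomial growth step: it is the piece of one-variable real algebraic geometry that converts the super-exponential lower bound on $g$ demanded by $\Psi$ into the conclusion that $g(L)$ must be $+\infty$ at arbitrarily large $L$, which is exactly $\Psi^*$.
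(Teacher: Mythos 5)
Your proof is correct and follows essentially the same route as the paper: the same auxiliary function $h(L)=\sup\{H:\Xi(L,H)\}$, the same observation that $\neg\Psi^*$ forces $h$ to be finite (hence a real-valued semialgebraic function) on a terminal interval, and the same appeal to polynomial growth of one-variable semialgebraic functions to contradict the super-polynomial lower bound that $\Psi$ would impose. Your version merely makes the final contradiction quantitatively explicit, which is a harmless elaboration of the paper's argument.
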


\begin{proof} Clearly $\Psi^*$ implies $\Psi$. For the reverse
implication, let us consider the function
$$
h(L):=\sup \{H:\Xi(L,H)\}.
$$
Supposing that $\Psi^*$ does not hold, we see that there are arbitrarily
large values of $L$ such that $h(L)$ is finite. 
Since $h$ is a semialgebraic function, it has to be finite
for all $L\ge L_0$ for some $L_0$. But if $\Psi$ did hold,
then for every $m$ there are arbitrarily large values of $L$
with $h(L)\ge L^m$, while 
a semialgebraic function may have at most a polynomial growth.
Indeed, Bochnak, Coste and Roy \cite[Proposition 2.6.2]{BoCoRo}
have such a statement with the additional assumption that the
considered semialgebraic function is continuous (apparently in
the interest of a simpler proof). To see this in the required
slightly greater generality, one may first note that a semialgebraic function
is piecewise algebraic, with finitely many pieces
(\cite[Lemma~2.6.3]{BoCoRo}), and check that an algebraic function $h$
has at most polynomial growth (for this, we take
a polynomial $g$ with $g(h(x),x)=0$ and consider the terms with the largest
power of the first variable).

Hence $\Psi$ does not hold and the lemma is proved.
\end{proof}

\section{A lower bound for the Ramsey function}\label{s:lwb}

\begin{proof}[Proof of Proposition~\ref{p:lwb}.]
 We recall that the
\emph{cross-ratio} of an ordered $4$-tuple $(z_1,\ldots,z_4)$
of real numbers is defined as
\[
(z_1,z_2;z_3,z_4):=\frac{(z_1-z_3)(z_2-z_4)}{(z_2-z_3)(z_1-z_4)}.
\]
As is well known, the cross-ratio is invariant under projective
transforms of the form $x\mapsto (ax+b)/(cx+d)$ (assuming $ad-bc\ne0$),
and in particular, it is invariant under the two 
transformations  $f_1(x,X,Y)=X+Yx$ and $f_2(x,X,Y)=X+Y/x$ of $\FF_0$ 
(unless $Y=0$, that is).

We define the set of predicates $\bPhi=\{\Phi_1,\Phi_2,\Phi_3\}$
as follows:
\begin{eqnarray*}
\Phi_1(x_1,x_2) &:=& x_1=x_2,\\
\Phi_2(x_1,\ldots,x_5)&:=&
 \mathrm{DISTINCT}(x_1,\ldots,x_5)\,\wedge\,
 (x_1,x_2;x_3,x_4)^2\ge 4 \\
&&\ \wedge\, (x_1,x_2;x_3,x_5)^2\ge (x_1,x_2;x_3,x_4)^4,\\
\Phi_3(x_1,\ldots,x_5)&:=& \Phi_2(x_5,x_4,\ldots,x_1).
\end{eqnarray*}
Here $\mathrm{DISTINCT}(x_1,\ldots,x_5)$ has the expected meaning.
The condition $(x_1,x_2;x_3,x_4)^2\ge 4$ should be read
as $|(x_1,x_2;x_3,x_4)|\ge 2$, but it is written in squared form,
so that it can be expressed as a polynomial inequality
and we need not worry about the sign. A similar comment
applies to the last condition of~$\Phi_2$.

First we check that $\bPhi$ is Erd\H{o}s--Szekeres.
Thus, let $n$ be given, let $R$ be a suitable
constant, and let $\seq a$ be a sufficiently long sequence.
By Proposition~\ref{p:rams}, there is an $n$-term $R$-growing sequence
$\seq b$ such that $\seq b$ or $\rev(\seq b)$ has a 2-parametric
embedding in $\seq a$ via $\FF_0$. 

It is routine, and somewhat tedious,
 to check that $\Phi_2$ holds everywhere on any
$R$-growing sequence with a sufficiently large constant $R$
(intuitively it is clear that the cross-ratios have to grow fast,
and formally it can be checked as in Observation~\ref{o:lex-sign}),
and consequently, $\Phi_3$ holds everywhere on the reversal
of an $R$-growing sequence.

Since $\Phi_2$ and $\Phi_3$ are expressed in terms of cross-ratios,
and these are invariant under the transformations of $\FF_0$,
we get that if $\seq b$ $\FF_0$-embeds into $\seq a$,
then either the image of $\seq b$ is a constant sequence, or 
then $\Phi_2$ holds everywhere on the image of $\seq b$.
 Similarly, if $\rev(\seq b)$ $\FF_0$-embeds into
$\seq a$, then either the corresponding subsequence of
$\seq a$ is constant, or $\Phi_3$ holds everywhere on it.
Thus, $\bPhi$ is indeed Erd\H{o}s--Szekeres.

Next, we want to prove a doubly exponential lower bound
for $\ES_\bPhi(n)$.
To this end, let $\seq a=(1,2,\ldots,N)$, and let $\seq c$ be an
$n$-term subsequence on which one of $\Phi_2,\Phi_3$
holds everywhere ($\Phi_1$ is out since the terms of $\seq a$
are all distinct). By possibly replacing $\seq c$ by
its reversal, we may assume that $\Phi_2$ holds everywhere
on $\seq c$. Then the cross-ratios in $\seq c$
satisfy $|(c_1,c_2;c_3,c_4)|\ge 2$ and
$|(c_1,c_2;c_3,c_{i+1})|\ge |(c_1,c_2;c_3,c_{i})|^2$,
and so $|(c_1,c_2;c_3,c_{n})|\ge 2^{2^{n-4}}$.
Since all terms of $\seq c$ are integers, at least
one of them has to be at least $2^{2^{cn}}$.
\end{proof}

\section{Replacing a set of predicates by a single predicate}\label{s:replPhis}

Here we consider two simple constructions for replacing a finite set
$\bPhi=\{\Phi_1,\ldots,\Phi_r\}$ of $d$-di\-mension\-al
$k$-ary predicates with a single predicate $\bar\Phi$, in such 
a way that $\bPhi$ is Erd\H{o}s--Szekeres iff $\bar\Phi$ is.

In the first construction, we simply set
 $\bar\Phi:=\bigvee_{i=1}^r\Phi_i$.
Then if some $\Phi_i$ holds everywhere on some sequence
$\seq\aa$, then so does $\bar\Phi$. Conversely, assuming that
$\bar\Phi$ holds everywhere on a sequence $\seq\aa$
of length $N$, we color
each $k$-tuple $j_1<\cdots<j_k$ of indices by a color $i$
such that $\Phi_i(\aa_{j_1},\ldots,\aa_{j_k})$ holds. Then by Ramsey's
theorem, some $\Phi_i$ holds everywhere on a suitable subsequence
of length $n$, provided that $N$ is sufficiently large in terms
of $n$, $k$, and~$r$. 

However, for this construction,
it might happen than $\ES_{\{\bar\Phi\}}$ is much larger than
$\ES_{\bPhi}$. Here is the second construction, which preserves
the Ramsey function but increases the number of variables and
the complexity of the predicate considerably.

\begin{lemma} Let $\bPhi=\{\Phi_1,\ldots,\Phi_r\}$ be a set of
$k$-ary predicates.
Then there is a predicate $\bar\Phi$ in $rk$ variables
such that $\{\bar\Phi\}$ is
Erd\H{o}s--Szekeres iff $\bPhi$ is, and 
if they are Erd\H{o}s--Szekeres, then
$\ES_{\bar \Phi}(n)=\ES_\bPhi(n)$ for all $n\ge rk$.
\end{lemma}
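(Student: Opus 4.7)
The plan is to take $\bar\Phi$ to encode directly the statement ``some single $\Phi_i \in \bPhi$ holds on every increasing $k$-subtuple of the given $rk$-tuple of arguments''. Formally, I set
\[
\bar\Phi(\xx_1,\ldots,\xx_{rk}) \;:=\; \bigvee_{i=1}^r \bigwedge_{1 \le j_1 < \cdots < j_k \le rk} \Phi_i(\xx_{j_1},\ldots,\xx_{j_k}).
\]
Since $\bar\Phi$ is a Boolean combination of the $\Phi_i$, it inherits from them any syntactic feature (e.g., semialgebraicity), so nothing is lost in the passage from $\bPhi$ to $\bar\Phi$ beyond the growth in variables and length of the formula.

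The first, easy direction is to observe that if some $\Phi_i$ holds everywhere on a sequence $\seq\aa$, then for every $rk$-subtuple $T$ of $\seq\aa$, $\Phi_i$ holds on all $k$-sub-subtuples of $T$, so $\bar\Phi(T)$ is true. Hence $\bar\Phi$ holds everywhere on $\seq\aa$, yielding $\ES_{\{\bar\Phi\}}(n) \le \ES_\bPhi(n)$ for every $n \ge rk$.

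The substantive direction is the converse: if $\bar\Phi$ holds everywhere on a sequence $\seq\bb$ of length $n \ge rk$, then some single $\Phi_i$ holds everywhere on $\seq\bb$. I would argue by contradiction. Suppose that for every $i \in \{1,\ldots,r\}$ there is a $k$-subtuple $K_i \subseteq \seq\bb$ on which $\Phi_i$ fails. The union $K_1 \cup \cdots \cup K_r$ contains at most $rk$ points, and because $n \ge rk$ it can be extended (by appending arbitrary further terms of $\seq\bb$) to an $rk$-subtuple $T \subseteq \seq\bb$. By hypothesis $\bar\Phi(T)$ holds, so there is some $i^*$ for which $\Phi_{i^*}$ holds on every $k$-subtuple of $T$; in particular $\Phi_{i^*}$ holds on $K_{i^*} \subseteq T$, contradicting the choice of $K_{i^*}$. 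This gives $\ES_{\{\bar\Phi\}}(n) \ge \ES_\bPhi(n)$ for all $n \ge rk$, and combined with the easy direction we obtain the claimed equality. The equivalence of the Erd\H{o}s--Szekeres property follows immediately: $\ES_{\{\bar\Phi\}}$ is finite for all $n$ iff $\ES_\bPhi$ is, since the two functions agree for all $n \ge rk$ (and for $n < rk$ the value $\ES_{\{\bar\Phi\}}(n)$ is trivially at most $\ES_{\{\bar\Phi\}}(rk)$).

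The only real obstacle is the packing step in the contradiction argument: one must accommodate up to $r$ failure-witness tuples, totalling at most $rk$ points, inside a single $rk$-subtuple of $\seq\bb$. This is precisely what forces the hypothesis $n \ge rk$ and explains why the equality is asserted only in that range.
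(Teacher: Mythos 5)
Your construction of $\bar\Phi$ as $\bigvee_{i=1}^r\bigwedge_{j_1<\cdots<j_k}\Phi_i(\xx_{j_1},\ldots,\xx_{j_k})$ and the contradiction argument (collecting one failure-witness $k$-tuple per $\Phi_i$, padding their union to an $rk$-subtuple, and deriving that $\bar\Phi$ fails there) are exactly the paper's proof. The argument is correct as written.
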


\begin{proof}
We first define $(rk)$-ary predicates $\Psi_i$, 
$i=1,2,\ldots,r$, where $\Psi_i(\xx_1,\ldots,\xx_{rk})$
expresses that $\Phi_i$ holds everywhere on the sequence
$(\xx_1,\xx_2,\ldots,\xx_{rk})$. Explicitly,
$$
\Psi_{i}(\xx_1,\ldots,\xx_{rk}):=\bigwedge_{1\le j_1<\cdots<j_k\le rk} 
\Phi_i(\xx_{j_1},
\ldots,\xx_{j_k}).
$$
Then we set $\bar\Phi(\xx_1,\ldots,\xx_{rk}):=
\bigvee_{i=1}^r \Psi_i(\xx_1,\ldots,\xx_{rk})$.

Clearly, if some $\Phi_i$ holds everywhere on a sequence
$\seq a$, then so does $\bar\Phi$. Conversely, suppose that
$\bar\Phi$ holds everywhere on a sequence $\seq a$ of length $n\ge rk$;
we claim that then some $\Phi_i$ holds everywhere on $\seq a$ as well.

If it were not the case, we fix, for every $i=1,2,\ldots,r$,
a $k$-tuple $J_i\subseteq [n]$ such that $\Phi_i$ does not hold
on the corresponding $k$ terms of $\seq\aa$. We consider the union
$\bigcup_{i=1}^r J_i$ and, if it has fewer than $rk$ elements,
we add the appropriate number of other elements of $[n]$ 
(chosen arbitrarily) to it, obtaining an $rk$-element set $J$.
Then $\bar\Phi$ does not hold on the subsequence of $\seq\aa$ indexed by $J$;
the resulting contradiction proves the lemma.
\end{proof}

\section{Algebraic predicates, and the multipartite setting}
\label{s:multip}

In this section we prove the effective compactness
for  $d$-dimensional algebraic predicates (Theorem~\ref{t:algpred}),
as well as the effective compactness for semialgebraic predicates
in the multipartite setting (Theorem~\ref{t:multip}).
We begin with the multipartite setting, since the proof is formally somewhat
simpler.

We will repeatedly use the following straightforward fact:
for every set $P\subseteq \R[x_1,\ldots,x_t]$ of 
$t$-variate polynomials, each of degree at most $D$,
there is a subset $P_0\subseteq P$ of at most ${D+t\choose t}$
polynomials that defines the same variety in $\R^t$ as $P$,
where the variety defined by $P$ is $V(P)=
\{\xx\in\R^t: p(\xx)=0\mbox{ for all }p\in P\}$.
Indeed, the vector space of all $t$-variate polynomials
of degree at most $D$ has dimension ${D+t\choose t}$,
with the set of all monomials of degree at most $D$ forming a basis, and if
we choose $P_0$ as a basis of the subspace generated
by $P$, then we have $V(P_0)=V(P)$.
We will refer to this fact as the \emph{bounded-dimension
argument}.

\begin{proof}[Proof of Theorem~\ref{t:multip}. ]
Throughout the proof, by saying that a certain quantity
is \emph{bounded} we mean that
it can be bounded from above by some explicit function
of $d$ (the space dimension), $k$ (the arity of the predicate), and $D$
(the maximum degree of the polynomials in the predicate).

To explain the idea of the proof
in a simpler setting, we first assume that
$\Phi$ is \emph{algebraic} and binary, i.e., $k=2$.

We need to prove only the implication (i)\,$\Rightarrow$\,(ii).
So we assume that $\Phi(\xx_1,\xx_2)$ holds everywhere
on $A_1\times A_2$, where $|A_1|=|A_2|=N$ is large but so
far unspecified. 
Our goal is to construct infinite 
sets $X_1,X_2\subseteq \R^d$ such that $\Phi$ holds everywhere
on $X_1\times X_2$.

By the assumptions of the theorem, 
 $\Phi$ is a conjunction of polynomial equations
of degree at most $D$, and by the bounded-dimension argument,
we can assume that the number of equations is bounded.

The plan is the following: We are going to define two
semialgebraic sets (and actually, algebraic varieties)
$V_{1,2},V_{2,1}\subseteq \R^d$, such that 
$A_1\subseteq V_{1,2}$, $A_2\subseteq V_{2,1}$, and
$\Phi$ is easily checked to hold on $V_{1,2}\times V_{2,1}$. 
We will also show that
$V_{1,2}$ and $V_{2,1}$ can be defined by bounded-size formulas 
 (i.e., the formula is a Boolean combination
of at most $m$ polynomial equations and inequalities,
each of degree at most $\bar D$, where $m$ and $\bar D$ are bounded).
Then we will use a result stating that the number of connected
components of a semialgebraic set definable by a bounded-size
formula is bounded; see \cite{GabriVor} or
\cite[Theorem~7.50]{BasuPollackRoy-book}.
Finally, if $N=|A_1|$ is larger than the number of components
of $V_{1,2}$, then two points of $A_1$ must be connected
by a curve, and hence $V_{1,2}$ is infinite (and similarly
for~$V_{2,1}$).
 
%
%

It remains to define $V_{1,2}$ and $V_{2,1}$ and to verify the
claimed properties.  Let $F_1=F_1(\xx_1)$ be the formula
$\bigwedge_{\aa_2\in A_2} \Phi(\xx_1,\aa_2)$,  and let $V_1\subseteq\R^d$
be the variety $\{\xx_1\in\R^d: F_1(\xx_1)\}$.
We define  $F_2(\xx_2):=\bigwedge_{\aa_1\in A_1} \Phi(\aa_1,\xx_2)$
and $V_2$ similarly. We have $A_i\subseteq V_i$, $i=1,2$, by the assumption.

We set
$$
V_{1,2}:= \Bigl\{\xx_1\in V_1:(\forall\xx_2\in V_2)\, \Phi(\xx_1,\xx_2)\Bigr\}
$$
and
$$
V_{2,1}:= \Bigl\{\xx_2\in V_2:(\forall\xx_1\in V_1)\, \Phi(\xx_1,\xx_2)\Bigr\}.
$$
By definition, $\Phi$ holds everywhere on $V_{1,2}\times V_2$,
as well as on $V_1\times V_{2,1}$. Since $V_{1,2}\subseteq V_1$ and
$V_{2,1}\subseteq V_2$, we get that $\Phi$ holds 
everywhere on $V_{1,2}\times V_{2,1}$. 

It is easy to see that $A_1\subseteq V_{1,2}$
and $A_2\subseteq V_{2,1}$, and we now want to
argue that $V_{1,2}$ and $V_{2,1}$ can be defined
by bounded-size formulas.

First, $V_1$ is the variety defined
by the polynomials $f(\xx_1,\aa_2)$, where $f$
is one of the polynomials in $\Phi$ and $\aa_2\in A_2$.
By the bounded-dimension argument, only a bounded number
of these polynomials suffice to define $V_1$, and hence
we can obtain a bounded-size formula $\tilde F_1$ 
that is equivalent to~$F_1$.
Similarly, from $F_2$ we obtain a bounded-size
equivalent formula~$\tilde F_2$.

Now we consider the formula $F_{1,2}(\xx_1)$ defining $V_{1,2}$.
By definition,
$$
F_{1,2}(\xx_1):= F_1(\xx_1)\wedge \forall \xx_2 (F_2(\xx_2)
\Rightarrow \Phi(\xx_1,\xx_2)).
$$
We now replace $F_1$ and $F_2$ with $\tilde F_1$ and $\tilde F_2$,
which gives a bounded-size formula describing $V_{1,2}$, and then
we perform quantifier elimination for $\tilde F_2$ to obtain a
quantifier-free formula $\bar F_{1,2}(\xx_1)$ that is equivalent
to $F_{1,2}(\xx_1)$ and thus also defines~$V_{1,2}$. Moreover,
by the properties of available quantifier-elimination methods 
for the first-order theory of the reals
(see \cite[Sec.~11.3]{BasuPollackRoy-book}), quantifier elimination
for a bounded-size formula again yields a bounded-size formula, and so
the size of $\bar F_{1,2}$ is still bounded. Hence we can bound
the number of connected components of $V_{1,2}$
as in the plan above, and the considered special case of 
Theorem~\ref{t:multip} is proved.

\medskip

Next, we still assume that $\Phi=\Phi(\xx_1,\ldots,\xx_k)$ is algebraic,
but it can be $k$-ary for any~$k$. The basic idea is the same as before,
but this time we define a more complicated sequence of sets
$V_{i,J}$, with $i\in [k]$ and $J\subseteq [k]\setminus\{i\}$,
inductively by
\begin{eqnarray*}
V_{i,\emptyset}&:=&\Bigl\{\xx_i\in\R^d: (\forall \aa_1\in A_1)\cdots
(\forall \aa_{i-1}\in A_{i-1})(\forall \aa_{i+1}\in A_{i+1})\cdots
(\forall \aa_k\in A_k) \\
&&\ \ \ \ 
\Phi(\aa_1,\ldots,\aa_{i-1},\xx_i,\aa_{i+1},\ldots,\aa_k)\Bigr\}
\end{eqnarray*}
and, for $J\ne\emptyset$,
\begin{eqnarray*}
V_{i,J}&:=&\Big\{\xx_i\in \bigcap_{J'\subset J} V_{i,J'}:
(\forall\xx_j\in V_{j,J\setminus\{j\}},j\in J)
(\forall\aa_j\in A_j, j\in [k]\setminus \{i\}\setminus J)\\
&&\ \ 
\Phi(\xx_i;\xx_j,j\in J;\aa_j,j\in [k]\setminus \{i\}\setminus J)\Big\}.
\end{eqnarray*}
where the sequence of arguments of $\Phi$
means that the $i$th argument equals $\xx_i$,
the $j$th argument equals $\xx_j$ for all $j\in J$,
and the $j$th argument equals $\aa_j$ for 
all $j\in [k]\setminus \{i\}\setminus J$.

By definition, for $J'\subseteq J$ we have $V_{i,J}\subseteq V_{i,J'}$,
and this then shows that $\Phi$ holds everywhere on 
$V_{1,[k]\setminus\{1\}}\times\cdots\times V_{k,[k]\setminus\{k\}}$.
Inductively it is also easy to show that $A_i\subseteq V_{i,J}$ for all~$J$.

It remains to show that each $V_{i,J}$ can be described by
a bounded-size formula, possibly involving
quantifiers (then we apply  quantifier elimination and bound
the number of connected components as in the case $k=2$). 
We are going to show this by induction on $|J|$. 

Thus, the inductive hypothesis is that
we have bounded-size formulas $\tilde F_{i,J}(\xx_i)$,
possibly with quantifiers, describing $V_{i,J}$,
for all $J$ up to some size $s$, and we want to get such a formula
for $J$ of size~$s+1$.

By the definition, we have the following formula defining $V_{i,J}$:
\begin{eqnarray*}
F_{i,J}(\xx_i)&=&\biggl(\bigwedge_{J'\subset J}\tilde F_{i,J'}(\xx_i)\biggr)\\
&& 
\wedge (\forall \xx_j,j\in J)\biggl[
\biggl(\bigwedge_{j\in J} \tilde F_{j,J\setminus\{j\}}(\xx_j)\biggr)
 \Rightarrow G_{i,J}(\xx_i;\xx_j:j\in J)\biggr],
\end{eqnarray*}
where $G_{i,J}(\xx_i;\xx_j,j\in J)$ is the formula asserting
that $\Phi(\xx_i;\xx_j,j\in J;\aa_j,j\in [k]\setminus J\setminus\{i\})$
holds for all $\aa_j\in A_j$, $j\in [k]\setminus J\setminus\{i\}$. Now $G_{i,J}$ can be replaced with a bounded-size formula by the bounded-dimension
argument, and this yields the desired bounded-size
formula $\tilde F_{i,J}$, finishing the induction step.
We have proved the desired result for algebraic predicates.
\medskip

Finally, we let
$\Phi=\Phi(\xx_1,\ldots,\xx_n)$ be an arbitrary semialgebraic
predicate.
We may assume it to be of the form 
$\Phi=\Phi_1\vee\Phi_2\vee\cdots\vee \Phi_m$,
where each $\Phi_i$ is a conjunction of polynomial equations and
strict inequalities. Given large sets $A_1,\ldots,A_k$
such that $\Phi$ holds on $A_1\times\cdots\times A_k$,
by the $k$-partite Ramsey theorem we get that some $\Phi_\ell$ holds everywhere
on $A_1'\times\cdots\times A'_k$, where the $A_i'\subseteq A_i$ are
still large. (This is the only step where a dependence of $N$
on $\Phi$ enters, and so if $\Phi$ is a conjunction of atoms
and this step is skipped, $N$ depends only on $d$, $D$, and $k$ as claimed.)

Now we assume that $\Phi=\Phi_{=}\wedge\Phi_{<}$, where $\Phi_=$
is a conjunction of polynomial equations and $\Phi_{<}$ is a conjunction
of strict polynomial inequalities, and that $\Phi$ holds everywhere on
$A_1\times\cdots\times A_k$.

By the argument above for the $k$-partite algebraic case,
we obtain curves $Y_1,Y_2,\ldots,Y_k$,
where each $Y_i$ connects two distinct points of $A_i$,
and 
such that $\Phi_=$ holds on $Y_1\times\cdots\times Y_k$.
In particular, there are
$\aa_i\in A_i$, $i\in [k]$, such that each $\aa_i$ is a cluster
point of $Y_i$. Since $\Phi_<$ holds at $(\aa_1,\ldots,\aa_k)$,
it also holds for all $(\yy_1,\ldots,\yy_k)$ from a small
neighborhood of $(\aa_1,\ldots,\aa_k)$. Thus, we can choose
infinite sets $X_i\subseteq Y_i$ such that $\Phi$ holds everywhere
on $X_1\times\cdots\times X_k$ as desired. Theorem~\ref{t:multip}
is proved.
\end{proof}

As was mentioned in the introduction, the analog
of Theorem~\ref{t:multip} also holds for algebraic predicates
over the complex numbers. To see this, only two ingredients
in the proof above need to be changed: first, quantifier elimination
for the first-order theory of the reals needs to be replaced by
quantifier elimination for the first-order theory of algebraically
closed fields  (see \cite{puddu-sabia} for a recent work
and references), and second, we bound the number
of connected components of a variety in $\C^t$ by regarding
it as a semialgebraic set in $\R^{2t}$ and applying
the same bounds as in the proof above.

\heading{Algebraic predicates on sequences. }
Theorem~\ref{t:algpred} follows easily from the next two lemmas.
The first lemma is very similar to Theorem~\ref{t:multip},
but the sets $A_1,\ldots,A_k$ in (i), as well
as the sets $X_1,\ldots,X_k$ in (ii), are all equal.

\begin{lemma}\label{l:samefact} 
In the setting of Theorem~\ref{t:multip},
if a $d$-di\-mension\-al $k$-ary
semialgebraic predicate $\Phi$ that is a conjunction of 
polynomial equations and inequalities and
involves polynomials of degree at most $D$
is assumed to hold everywhere on $A^k=A\times\cdots\times A$,
where $|A|= N=N(d,k,D)$, then there is an infinite $X$
such that $\Phi$ holds everywhere on~$X^k$. For an arbitrary
$d$-di\-mension\-al $k$-ary semialgebraic $\Phi$, a similar statement
holds, only with $N$ depending on the length of $\Phi$ as well.
\end{lemma}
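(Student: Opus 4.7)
The plan is to adapt the proof of Theorem~\ref{t:multip} by replacing the asymmetric varieties $V_{i,J}$ with a single, symmetric variety $\mathcal{V}\supseteq A$ on which $\Phi_{=}$ (the equality part of $\Phi$) holds on the entire power $\mathcal{V}^k$; once this is done, the rest of the argument then mimics the multipartite case almost verbatim.

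Suppose first that $\Phi=\Phi_{=}\wedge\Phi_{<}$ is a conjunction of polynomial equations and strict inequalities. Inductively on $m=0,1,\ldots,k-1$ I would define nested sets $\mathcal{V}_0\supseteq\mathcal{V}_1\supseteq\cdots\supseteq\mathcal{V}_{k-1}$ by letting $\mathcal{V}_m$ consist of all $\xx\in\bigcap_{m'<m}\mathcal{V}_{m'}$ such that, for every position $i\in[k]$, every size-$m$ subset $S\subseteq[k]\setminus\{i\}$, every choice of $\vv_j\in\mathcal{V}_{m-1}$ for $j\in S$, and every choice of $\aa_j\in A$ for $j\in[k]\setminus S\setminus\{i\}$, the formula $\Phi_{=}$ holds on the $k$-tuple whose entries at position $i$, at positions in $S$, and at the remaining positions are $\xx$, the $\vv_j$'s, and the $\aa_j$'s, respectively. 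The crucial claim is that $A\subseteq\mathcal{V}_m$ for every~$m$: the base case $m=0$ is the hypothesis that $\Phi_{=}$ holds on $A^k$, and in the inductive step one fixes $\xx=\aa\in A$ and exploits the defining condition of $\mathcal{V}_{m-1}$ applied to some $\vv_{j_0}$ with $j_0\in S$; taking the inner parameters $i':=j_0$, $S':=S\setminus\{j_0\}$, $\vv_j':=\vv_j\in\mathcal{V}_{m-1}\subseteq\mathcal{V}_{m-2}$ for $j\in S'$, and $\aa_i'':=\aa$ at the position $i$ (which is available because $i\in[k]\setminus S'\setminus\{i'\}$) delivers precisely the required instance of $\Phi_{=}$.

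Setting $\mathcal{V}:=\mathcal{V}_{k-1}$ and specializing the defining condition to $i=1$, $S=[k]\setminus\{1\}$ shows that $\Phi_{=}$ holds everywhere on $\mathcal{V}^k$. By the bounded-dimension argument and successive quantifier elimination, carried out exactly as in the proof of Theorem~\ref{t:multip}, $\mathcal{V}$ admits a bounded-size defining formula and hence has at most a bounded number of connected components. Choosing $N$ larger than this bound forces two distinct points of $A$ into a common component, which yields a semialgebraic curve $Y\subseteq\mathcal{V}$ passing through some $\aa\in A$. Since $\Phi_{<}$ holds at the diagonal tuple $(\aa,\ldots,\aa)\in A^k$, by the openness of strict polynomial inequalities $\Phi_{<}$ persists on a whole neighborhood of that diagonal, and any infinite $X\subseteq Y$ close enough to $\aa$ then satisfies $\Phi$ everywhere on $X^k$.

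For an arbitrary semialgebraic $\Phi$, one first writes $\Phi$ in disjunctive normal form and applies a hypergraph-Ramsey argument on $A$ to reduce to a single conjunctive clause, which introduces the stated dependence of $N$ on the length of $\Phi$. The main obstacle I anticipate is getting the symmetric construction of $\mathcal{V}_m$ right: the nested universal quantifiers must be arranged so that the inductive claim $A\subseteq\mathcal{V}_m$ can \emph{absorb} a point $\aa\in A$ placed at an arbitrary position $i$ by re-interpreting it as one of the $A$-constants inside the defining condition of a lower-index $\mathcal{V}_{m'}$. Once this symmetric absorption mechanism is in place, the remainder should be a routine transfer of the $k$-partite proof.
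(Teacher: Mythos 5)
Your proposal is correct and follows essentially the same route as the paper: separate $\Phi$ into $\Phi_{=}\wedge\Phi_{<}$, adapt the nested-variety construction from the proof of Theorem~\ref{t:multip} to obtain a single variety containing $A$ on whose $k$th power $\Phi_{=}$ holds, bound its number of connected components via the bounded-dimension argument and quantifier elimination, and then handle $\Phi_{<}$ by openness near a diagonal point of $A^k$ (with the DNF-plus-Ramsey reduction for general semialgebraic $\Phi$). The only difference is organizational: where you build the symmetric chain $\mathcal{V}_0\supseteq\cdots\supseteq\mathcal{V}_{k-1}$ by hand and verify the ``absorption'' step $A\subseteq\mathcal{V}_m$ directly, the paper gets the same effect more economically by first replacing $\Phi$ with its symmetrization $\bigwedge_{\pi\in S_k}\Phi(\xx_{\pi(1)},\ldots,\xx_{\pi(k)})$ and reusing the multipartite sets $V_{i,J}$ verbatim, observing that by symmetry the $V_{i,[k]\setminus\{i\}}$ all coincide.
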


\begin{proof} First we assume $\Phi$ algebraic. 
We consider the ``symmetrization'' $\Phi_{\rm sym}$
of $\Phi$:
\[
\Phi_{\rm sym}(\xx_1,\ldots,\xx_k):=\bigwedge_{\pi\in S_k} 
\Phi(\xx_{\pi(1)},\ldots,\xx_{\pi(k)})
\]
(the conjunction is over all permutations of $[k]$). Then $\Phi_{\rm sym}$
also holds everywhere on $A^k$. When we define the sets $V_{i,J}$
for $\Phi_{\rm sym}$ as in the proof of Theorem~\ref{t:multip} above,
we have that $\Phi_{\rm sym}$, and hence $\Phi$, holds everywhere
on $\prod_{i=1}^k V_{i,[k]\setminus i}$. By the symmetry of
$\Phi_{\rm sym}$,  the $V_{i,J}$
satisfy $V_{\pi(i),\pi(J)}=V_{i,J}$ for every permutation
$\pi$, and in particular, the varieties $V_{i,[k]\setminus i}$
are all equal. Denoting their common value by $V$, we get that
$\Phi$ holds everywhere on $V^k$. The infinitude of 
$V$ follows by bounding the number of components as above.

If $\Phi$ is semialgebraic, then we again proceed as in the proof
of Theorem~\ref{t:multip}, observing that the choice of $X_1,\ldots,X_k$
can also be done symmetrically, i.e., with $X_1=\cdots=X_k$.
\end{proof}

The second lemma shows that for algebraic predicates, we can pass
from ``holding everywhere on a long sequence'' to ``holding everywhere
on a large Cartesian power''.

\begin{lemma} For every $d,k,D,n$ there exists $N$ 
(for which an explicit bound can be given) with the following
property. Assuming that a $d$-di\-mension\-al $k$-ary  algebraic predicate
$\Phi$
involving polynomials of degree at most $D$
holds everywhere on a sequence $\seq\aa$ of length $N$, with
all terms distinct,
then there is an $n$-element set $B\subseteq\seq\aa$
such that $\Phi$ holds everywhere on $B^k$
(here the inclusion $B\subseteq\seq\aa$ means that $B$ consists
of some of the terms of the sequence $\seq\aa$).
\end{lemma}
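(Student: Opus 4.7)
My plan is to combine a Ramsey reduction with the multipartite Theorem~\ref{t:multip} and the symmetrization argument used in Lemma~\ref{l:samefact}, so as to produce an algebraic variety $V\subseteq\R^d$ of bounded complexity on which $\Phi$ holds on the full Cartesian power $V^k$, and then to extract the desired $B$ as $n$ terms of the sequence lying in~$V$.

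\medskip

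The first step is a Ramsey reduction. Colour each increasing $k$-tuple $(i_1<\cdots<i_k)$ of indices in $\seq\aa$ by the set $S\subseteq S_k$ of permutations $\pi$ for which $\Phi(\aa_{i_{\pi(1)}},\ldots,\aa_{i_{\pi(k)}})$ holds; there are at most $2^{k!}$ colours, so for $N\ge R_k(n_1;2^{k!})$ with $n_1$ sufficiently large, Ramsey's theorem yields a homogeneous subsequence $\seq\bb$ of length $n_1$ with a common colour $S_0\ni\mathrm{id}$ (the identity is in $S_0$ because $\Phi$ itself holds on each increasing $k$-tuple). Writing $\Phi_\pi(\xx_1,\ldots,\xx_k):=\Phi(\xx_{\pi(1)},\ldots,\xx_{\pi(k)})$, the algebraic predicate $\Psi:=\bigwedge_{\pi\in S_0}\Phi_\pi$ then holds on every increasing $k$-tuple of $\seq\bb$.

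\medskip

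For the second step I would split $\seq\bb$ into $k$ consecutive equal blocks $A_1,\ldots,A_k$. Each element of $A_1\times\cdots\times A_k$ is automatically in increasing index order, so $\Psi$ holds on $A_1\times\cdots\times A_k$; running the proof of Theorem~\ref{t:multip} with $\Psi$ in place of $\Phi$ produces algebraic varieties $V_1,\ldots,V_k\subseteq\R^d$, each of complexity bounded only in terms of $d$, $k$, $D$, and $|S_0|\le k!$, with $A_i\subseteq V_i$ and $\Psi$ holding on $V_1\times\cdots\times V_k$. Put $V:=V_1\cap\cdots\cap V_k$. For any $(\xx_1,\ldots,\xx_k)\in V^k$ and any $\sigma\in S_k$, the permuted tuple $(\xx_{\sigma(1)},\ldots,\xx_{\sigma(k)})$ again lies in $V_1\times\cdots\times V_k$, so $\Psi_\sigma$ holds on $V^k$ for every~$\sigma$. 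The conjunction $\bigwedge_\sigma\Psi_\sigma$ equals $\bigwedge_{\tau\in\bigcup_\sigma\sigma S_0}\Phi_\tau$; since $\mathrm{id}\in S_0$ gives $\bigcup_\sigma\sigma S_0=S_k$, this is $\Phi_{\rm sym}$. Hence $\Phi$ itself holds everywhere on~$V^k$, and it remains to find $n$ terms of $\seq\aa$ inside~$V$.

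\medskip

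The main obstacle is now to ensure that $V$ contains many terms of $\seq\bb$. By the bounded-component estimate used in Section~\ref{s:multip}, $V$ has only a bounded number of irreducible components, so after a pigeonhole it is enough to show that many terms of $\seq\bb$ lie in $V$ at all. The naive approach, trying to prove $\seq\bb\subseteq V_i$ directly, fails because plugging $\bb_m\in A_l$ (with $l\ne i$) into the $i$-th slot of a tuple with the remaining entries taken from $A_j$, $j\ne i$, produces a non-increasing tuple on which $\Psi$ is not automatically controlled. To circumvent this I would rerun the inductive $V_{i,J}$ construction from the proof of Theorem~\ref{t:multip}, but quantifying over subsequences of $\seq\bb$ rather than over the original blocks $A_j$, and using the stronger input---crucial here---that $\Psi$ holds on \emph{every} increasing $k$-tuple of $\seq\bb$, not only on those drawn in the product form from $A_1\times\cdots\times A_k$. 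Combined with the bounded-dimension argument and with quantifier elimination to keep the varieties of bounded size, this should give modified varieties $\widetilde V_1,\ldots,\widetilde V_k$, still of bounded complexity, each containing all of $\seq\bb$, with $\Psi$ on $\widetilde V_1\times\cdots\times\widetilde V_k$. Replacing $V$ by $\widetilde V_1\cap\cdots\cap\widetilde V_k$ and choosing $B$ to be any $n$ terms of $\seq\bb\subseteq\widetilde V$ finishes the argument, and chaining the Ramsey bound $R_k(\,\cdot\,;2^{k!})$, the bound from Theorem~\ref{t:multip}, and the component count produces an explicit value of $N=N(n,d,k,D)$.
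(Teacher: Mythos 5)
Your plan has a genuine gap at the step you yourself flag as the ``main obstacle,'' and the sketched fix does not close it. After the Ramsey step you know only that $\Psi$ holds on \emph{increasing} $k$-tuples of $\seq\bb$. Rerunning the $V_{i,J}$-induction from Theorem~\ref{t:multip} while ``quantifying over subsequences of $\seq\bb$'' cannot give $\seq\bb\subseteq\widetilde V_i$ for an intermediate slot $i$: already in the base set, putting $\bb_m$ into slot $i$ and quantifying over all of $\seq\bb$ in the other slots produces $k$-tuples with indices both above and below $m$, about which the increasing-tuple hypothesis says nothing. If instead you quantify only over indices above $m$, you obtain a future-looking variety that does contain every $\bb_m$, but it no longer constrains later terms placed in earlier slots, so intersecting these over $i$ again requires precisely the non-monotone control you lack. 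In effect the modified construction would have to establish directly that $\Psi$ holds on $\seq\bb^k$, which is essentially the conclusion to be proved. The appeal to connected components is also a detour: in Theorem~\ref{t:multip} it is used to show a variety is \emph{infinite} once it is already known to contain the whole finite set $A_i$, whereas your unsolved problem is to get the variety to contain many terms of the finite sequence in the first place.

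The paper's own proof is structurally different and bypasses Ramsey, block decomposition, symmetrization, and quantifier elimination entirely. It is a $k$-step induction on the sequence, strengthening at stage $j$ the invariant from ``$\Phi$ holds when the first $j-1$ indices are arbitrary and the remaining $k-j+1$ are increasing'' to the same with $j$. The key object is the nested chain
\[
W^{(j)}_i := \Bigl\{(\xx_1,\ldots,\xx_j) : \Phi(\xx_1,\ldots,\xx_j,\bb_{i_{j+1}},\ldots,\bb_{i_k})\ \text{for all}\ i<i_{j+1}<\cdots<i_k\Bigr\},
\]
which is non-decreasing in $i$. The bounded-dimension argument bounds the number of distinct varieties in the chain, so some long window $[i_{\min}+1,i_{\max}]$ satisfies $W^{(j)}_{i_{\min}}=W^{(j)}_{i_{\max}}$; restricting $\seq\bb$ to that window then allows any index in the window to occupy slot $j$, pushing the invariant forward. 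This sliding-cut stabilization --- letting the future threshold $i$ vary and observing that the associated variety eventually repeats across a long range --- is exactly the ingredient your modified $V_{i,J}$ construction is missing. It also yields $N$ only polynomially large in $n$ (roughly $n\prod_{j\le k}{D+dj\choose dj}$), rather than the Ramsey-number bound $R_k(\,\cdot\,;2^{k!})$ that your first step already imposes.
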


\begin{proof} The proof is again based on
the bounded-dimension argument.
Let us set $\seq\bb^{(0)}:= \seq\aa$, and let
us assume inductively that $\seq\bb^{(j-1)}$ is 
a subsequence of $\seq\aa$ of length $n_{j-1}$ such that
$\Phi(\bb_{i_1},\ldots,\bb_{i_k})$ holds for all choices
of $i_1,\ldots,i_{j-1}\in[n_{j-1}]$ and
$1\le i_{j}<i_{j+1}<\cdots<i_k\le n_{j-1}$.

For $i=1,2,\ldots,n_{j-1}$, let 
\begin{eqnarray*}
W^{(j)}_i&:=&\Bigl\{(\xx_1,\ldots,\xx_j)\in(\R^d)^j: 
(\forall i_{j+1},\ldots,i_k, i< i_{j+1}<\cdots<i_k)\\
&&\ \ \ \ \Phi(\xx_1,\ldots,\xx_j,
\bb_{i_{j+1}},\ldots,\bb_{i_k})\Bigr\}.
\end{eqnarray*}
The inductive assumption gives $(\bb_{i_1},\ldots,\bb_{i_j})\in
W^{(j)}_i$ for all $i_1,\ldots,i_{j-1}\in [n_{j-1}]$
and all $i_j\in [i]$.

We have $W^{(j)}_1\subseteq W^{(j)}_2\subseteq \cdots$, and
by the bounded-dimension argument, 
there are only a bounded number of distinct 
varieties among the $W^{(j)}_i$. Thus, there exist 
$i_{\rm min}<i_{\rm max}$, with $i_{\rm max}-i_{\rm min}$ large,
such that $W^{(j)}_{i_{\rm min}}=W^{(j)}_{i_{\rm max}}$.

For all $i_1,\ldots,i_{j-1}\in [n_{j-1}]$,
we have  $(\bb_{i_1},\ldots,\bb_{i_j})\in W^{(j)}_{i_{\rm max}}
=W^{(j)}_{i_{\rm min}}$
for all $i_j\in [i_{\rm min}+1,i_{\rm max}]$, and by the definition
of $W^{(j)}_{i_{\rm min}}$, we obtain that $\Phi(\bb_{i_1},\ldots,\bb_{i_k})$
holds for all $i_1,\ldots,i_j\in [i_{\rm min}+1,i_{\rm max}]$
and all $i_{j+1},\ldots,i_k$ with  $i_{\rm min}<i_{j+1}<\cdots<i_k$. Hence
we can let $\seq\bb^{(j)}$ be the subsequence of $\seq\bb^{(j-1)}$
indexed by $[i_{\rm min}+1,i_{\rm max}]$. This finishes the induction step.

The proof is concluded by letting $B$ be the set of elements of 
$\bb^{(k)}$. 
\end{proof}

\section*{Acknowledgments}
We would like to thank
Uri Andrews for help with logical vocabulary and for kindly scanning 
Rosenthal's thesis, and  Ehud Hrushovski
for a discussion. We are grateful to David Rosenthal for permission
to reproduce his thesis. Finally, we thank three anonymous referees
for careful reading and pointing out embarrassingly many small mistakes.

\bibliographystyle{plain}
\bibliography{cg,es}

\end{document}